\title{
Bipartite decomposition of random graphs
}
\author{Noga Alon
\thanks{Sackler School of Mathematics
and Blavatnik School of Computer Science,
Tel Aviv University,
Tel Aviv 69978, Israel.
Email: {\tt nogaa@tau.ac.il}.
Research supported in part by an ISF grant, by
a USA-Israeli BSF grant and by 
the Israeli I-Core program.
}
}
\newenvironment{proof}
      {\medskip\noindent{\bf Proof.}\hspace{1mm}}
      {\hfill$\Box$\medskip}
\def\qed{\ifvmode\mbox{ }\else\unskip\fi\hskip 1em plus 10fill$\Box$}
\newtheorem{theo}{Theorem}[section]
\newtheorem{prop}[theo]{Proposition}
\newtheorem{lemma}[theo]{Lemma}
\newtheorem{coro}[theo]{Corollary}
\newtheorem{conj}[theo]{Conjecture}
\newcommand{\FF}{{\cal F}}
\newcommand{\MM}{{\cal M}}
\begin{document}
\date{}

\maketitle

\begin{abstract}
For a graph $G=(V,E)$, let $\tau(G)$ denote the minimum number of
pairwise edge disjoint 
complete bipartite subgraphs of $G$ so that each edge of $G$
belongs to exactly one of them. It is easy to see that for every
graph $G$, 
$\tau(G) \leq n -\alpha(G)$, where $\alpha(G)$ is the maximum size
of an independent set of $G$. Erd\H{o}s conjectured in the 80s
that for almost every graph $G$ equality holds,
i.e., that for the random graph $G(n,0.5)$,
$\tau(G)=n-\alpha(G)$ with high probability, 
that is, with probability that tends to $1$ as $n$
tends to infinity. Here we show that this conjecture is (slightly)
false, proving that for most values of $n$ tending to infinity 
and for $G=G(n,0.5)$,
$\tau(G) \leq n-\alpha(G)-1$ with high probability, and that for
some sequences of values of $n$ tending to infinity
$\tau(G) \leq n-\alpha(G)-2$ with probability bounded away from $0$.
We also study the typical value of $\tau(G)$ for random
graphs $G=G(n,p)$ with $p < 0.5$ and show that there is an absolute
positive constant  $c$ so that for all $p \leq c$ and for
$G=G(n,p)$, $\tau(G)=n-\Theta(\alpha(G))$ with high probability.
\end{abstract}
\section{Introduction}
For a graph $G=(V,E)$, let $\tau(G)$ denote the minimum number of
pairwise edge disjoint 
complete bipartite subgraphs of $G$ so that each edge of $G$
belongs to exactly one of them. A well known theorem of Graham and
Pollak \cite{GP} asserts that $\tau(K_n)=n-1$, see \cite{Tv},
\cite{Pe}, \cite{Vi} for more proofs, and \cite{Al}, \cite{KRW} for
several variants.

Let $\alpha(G)$ denote the maximum size of an independent set of
$G$. It is easy to see that for every graph $G$,
$\tau(G) \leq n -\alpha(G)$. Indeed one can partition all edges
of $G$ into $n-\alpha(G)$ stars centered at the vertices of the
complement of a maximum independent set in $G$.
Erd\H{o}s conjectured (see
\cite{KRW}) that for almost every graph $G$ equality holds,
i.e., that for the random graph $G(n,0.5)$,
$\tau(G)=n-\alpha(G)$ with high probability 
({\em whp}, for short),
that is, with probability that tends to $1$ as $n$
tends to infinity.

Chung and Peng \cite{CP} extended the conjecture for the random
graphs $G(n,p)$ with $p \leq 0.5$, conjecturing that for any
$p \leq 0.5,~~ $
$\tau(G) =n -(1+o(1)) \alpha(G)$ whp. They also established lower
bounds supporting this conjecture, and the one of Erd\H{o}s,
by proving that for $G=G(n,p)$ and
for all $0.5 \geq p
\geq \Omega(1)$, $\tau(G) \geq n-o((\log n)^{3+\epsilon})$ for
any positive  $\epsilon$,
and that 
for $p=o(1)$ and $p = w(\frac{\log^2 n}{\sqrt n})$,
$\tau(G) \geq n-o((\frac{\log^3 n}{p^2})^{1+\eta})$ whp for any 
positive $\eta$.

Here we first show that Erd\H{o}s' conjecture for $G=G(n,0.5)$
is (slightly) incorrect. It turns out that for most values of $n$,
and for $G=G(n,0.5),~~~$
$\tau(G) \leq n - \alpha(G)-1$ whp, while for some exceptional 
values of $n$ (that is, those values for which the size of 
$\alpha(G)$ is concentrated in two points, and not in one),
$\tau(G) \leq n-\alpha(G)-2$ with probability that is bounded  away
from $0$. As far as we know it may be possible that for these values 
of $n~~$
$\tau(G) =n - \alpha(G)$ with probability bounded away from $0$
(but not with  probability that tends to $1$ as $n$ grows).

To state the result precisely let $\beta(G)$ denote the largest number
of vertices in an induced complete bipartite subgraph of $G$.
It is easy to see that for every $G$, $\tau(G) \leq n-\beta(G)+1$.
Indeed, one can decompose all edges of $G$ into $n-\beta(G)$ stars
centered at the vertices of the complement of an induced complete
bipartite subgraph $H$ of $G$ of maximum size, together with $H$
itself.
For an integer $n$ let $k_0=k_0(n)$ denote the largest integer $k$
so that  $f(k)={n \choose k}2^{-{k \choose 2}} \geq 1$. In words,
$k_0$ is the largest $k$ so that the expected number of independent
sets of size $k$ in $G=G(n,0.5)$ is at least $1$.
It is easy to check that $k_0=k_0(n)=(1+o(1)) 2 \log_2 n$,
that 
$n=\Theta( k_0 2^{k_0/2}) $ 
and and that for 
$k=(1+o(1))k_0$, $f(k+1)/f(k) =n^{-1+o(1)}$, (c.f., e.g.,
\cite{AS}). 
\begin{theo}
\label{t11}
Let $k_0=k_0(n)$ be as above. Then
\vspace{0.2cm}

\noindent
(i) If $1=o(f(k_0))$ and $f(k_0+1)=o(1)$ then whp
$\alpha(G)=k_0$ and $\beta(G)=k_0+2$. Therefore, in this case
$\tau(G) \leq n-\alpha(G)-1$ whp.
\vspace{0.2cm}

\noindent
(ii) If $f(k_0)=\Theta(1)$ then whp one of of the following four
possibilities holds, and each of them holds 
with probability that is bounded away from $0$ and $1$:

\noindent
(a) $\alpha(G)=k_0$ and $\beta(G)=k_0+2$.

\noindent
(b) $\alpha(G)=k_0$ and $\beta(G)=k_0+1$.

\noindent
(c) $\alpha(G)=k_0-1$ and $\beta(G)=k_0+2$.

\noindent
(d) $\alpha(G)=k_0-1$ and $\beta(G)=k_0+1$.
\vspace{0.2cm}

\noindent
(iii) If $f(k_0+1)=\Theta(1)$ then each of the four possibilities obtained 
from the ones above by replacing $k_0$ by $k_0+1$
is obtained with
probability bounded away from $0$ and $1$, and whp one of those holds.
\end{theo}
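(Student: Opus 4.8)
\medskip\noindent\textbf{Proof plan.}
The plan is to determine $\alpha(G)$ and $\beta(G)$ by applying the first and second moment methods to the number of independent $k$-sets and to the number of induced complete bipartite subgraphs on $m$ vertices, and then — in the critical cases (ii) and (iii), where the relevant expectations are $\Theta(1)$ — to upgrade this to a description of the \emph{joint} law of these two counts via a Poisson approximation. Throughout, $X_k$ denotes the number of independent sets of size $k$ in $G=G(n,0.5)$ and $Y_m$ the number of induced complete bipartite subgraphs on $m$ vertices, and all indices are in the range $m,k=(1+o(1))k_0=(1+o(1))2\log_2 n$.

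First I would record the two basic moment inputs. One has $\mathbb{E}[X_k]=f(k)$, and the standard second moment computation (see, e.g., \cite{AS}) shows that $X_k>0$ whp whenever $f(k)\to\infty$ in this range, while Markov's inequality gives $X_k=0$ whp when $f(k)\to 0$; since $f(k+1)/f(k)=\Theta(k_0/n)=n^{-1+o(1)}$, this already confines $\alpha(G)$ whp to an interval of length one around $k_0$. For $Y_m$ the clean observation is that, for \emph{any} partition of a fixed $m$-set into two nonempty classes, the number of vertex pairs whose edge/non-edge status is forced is always exactly $\binom{m}{2}$ (a non-edge within a class, an edge across), so
\[
\mathbb{E}[Y_m]=\binom{n}{m}2^{-\binom{m}{2}}\cdot\bigl(2^{m-1}-1\bigr)=\Theta\!\bigl(f(m)2^{m}\bigr).
\]
Combining this with $2^{k_0}=\Theta(n^2/k_0^2)$ (equivalently $n=\Theta(k_0 2^{k_0/2})$) and the ratio estimate gives
\[
\mathbb{E}[Y_{k_0+1}]=\Theta\!\Bigl(f(k_0)\tfrac{n}{k_0}\Bigr),\qquad
\mathbb{E}[Y_{k_0+2}]=\Theta\!\bigl(f(k_0)\bigr),\qquad
\mathbb{E}[Y_{k_0+3}]=\Theta\!\Bigl(f(k_0)\tfrac{k_0}{n}\Bigr),
\]
and the analogous formulas with $k_0$ replaced by $k_0+1$. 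A second moment computation for $Y_m$ — structurally the same as the one for independent sets, with the one extra point that a connected complete bipartite graph has a \emph{unique} bipartition, which is what keeps the overlap terms under control — shows that $Y_m>0$ whp whenever $\mathbb{E}[Y_m]\to\infty$ in our range.

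Now the proof splits along the hypotheses. In case (i), $f(k_0)\to\infty$ and $f(k_0+1)\to 0$; the first moment gives $\alpha(G)\le k_0$ and (since then $\mathbb{E}[Y_{k_0+3}]=\Theta(f(k_0)k_0/n)=o(1)$) also $\beta(G)\le k_0+2$ whp, while the second moment, applied to $X_{k_0}$ and to $Y_{k_0+2}$ (whose means tend to $\infty$), gives $\alpha(G)\ge k_0$ and $\beta(G)\ge k_0+2$ whp. Hence $\alpha(G)=k_0$, $\beta(G)=k_0+2$ whp, and $\tau(G)\le n-\beta(G)+1=n-\alpha(G)-1$ whp by the star-decomposition bound noted before the theorem. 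In cases (ii) ($f(k_0)=\Theta(1)$) and (iii) ($f(k_0+1)=\Theta(1)$) the same inputs confine $\alpha(G)$ and $\beta(G)$ whp to two consecutive values each — in (ii), $\alpha(G)\in\{k_0-1,k_0\}$ and $\beta(G)\in\{k_0+1,k_0+2\}$, since $\mathbb{E}[X_{k_0-1}],\mathbb{E}[Y_{k_0+1}]\to\infty$ and $\mathbb{E}[X_{k_0+1}],\mathbb{E}[Y_{k_0+3}]\to0$; in (iii) one adds $1$ throughout — and moreover $\{\alpha(G)=k_0\}$ and $\{\beta(G)=k_0+2\}$ coincide, up to an event of probability $o(1)$, with $\{X_{k_0}>0\}$ and $\{Y_{k_0+2}>0\}$ respectively. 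So it remains to show that each of the four sign patterns of $(X_{k_0},Y_{k_0+2})$ (respectively of $(X_{k_0+1},Y_{k_0+3})$ in case (iii)) has probability bounded away from $0$ and $1$.

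This last step is where the real work lies. Here I would show that the total variation distance between the pair $(X_{k_0},Y_{k_0+2})$ and a pair of independent Poisson random variables with means $\mathbb{E}[X_{k_0}]$ and $\mathbb{E}[Y_{k_0+2}]$ tends to $0$, via a bivariate Poisson (Chen--Stein, or Janson-type) approximation. Concretely, one has to bound (i) the expected number of ordered pairs of distinct independent $k_0$-sets sharing at least one vertex, (ii) the same for induced complete bipartite $(k_0+2)$-sets, and (iii) the expected number of pairs consisting of one independent $k_0$-set and one induced complete bipartite $(k_0+2)$-set that share a vertex; summing over the intersection size, the estimate underlying the second moment bounds above shows each of these is $o(1)$ throughout the range $m=(1+o(1))2\log_2 n$. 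Given this, $\Pr[X_{k_0}=0,\ Y_{k_0+2}=0]=e^{-\mathbb{E}[X_{k_0}]-\mathbb{E}[Y_{k_0+2}]}+o(1)$ and similarly for the other three patterns, and since $f(k_0)=\Theta(1)$ forces $\mathbb{E}[X_{k_0}]$ and $\mathbb{E}[Y_{k_0+2}]$ to lie between two positive constants, all four probabilities are bounded away from $0$ and $1$; translating back through the whp identifications above yields the four cases (a)--(d), and the same argument with $k_0\to k_0+1$ yields part (iii). The main obstacle I anticipate is exactly the bookkeeping in this bivariate Poisson approximation — getting the sums over all intersection sizes of the overlap terms, particularly for pairs of overlapping complete bipartite structures with a common partial bipartition, to vanish uniformly; everything else reduces to the routine first/second moment estimates sketched above.
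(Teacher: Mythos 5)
Your proposal is correct and follows essentially the same route as the paper: second/first moment estimates pin $\alpha(G)$ and $\beta(G)$ down (giving part (i) and the two-value localization in (ii), (iii)), and a Stein--Chen Poisson approximation with the same three families of overlap terms (independent-set pairs, complete-bipartite pairs, and the mixed pairs) handles the critical cases. The only cosmetic difference is that you invoke the bivariate Poisson approximation directly, whereas the paper, after noting that this two-dimensional route works, instead applies the one-dimensional version to $X$, $Y$ and $X+Y$ and deduces the four joint probabilities from $\Pr(X=0)$, $\Pr(Y=0)$, $\Pr(X+Y=0)$; the required estimates are identical.
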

We also improve the estimates of \cite{CP} for $G(n,p)$ for any
$c \geq p \geq \frac{2}{n}$, 
where $c$ is some small positive absolute constant,
determining the typical value of $n-\tau(G(n,p))$ up to a constant
factor in all this range.
\begin{theo}
\label{t12}
There exists an absolute constant $c>0$ so that for any 
$p$ satisfying $\frac{2}{n} \leq p \leq c$ and for $G=G(n,p)$
$$
\tau(G)= n-\Theta(\frac{\log (np)}{p})
$$
whp.
\end{theo}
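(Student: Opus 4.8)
The plan is to prove separately that $\tau(G)\le n-\Omega(\log(np)/p)$ and $\tau(G)\ge n-O(\log(np)/p)$, each whp. The first is routine: every graph satisfies $\tau(G)\le n-\alpha(G)$, and standard first‑ and second‑moment estimates give $\alpha(G(n,p))=\Theta(\log(np)/p)$ whp throughout $2/n\le p\le c$ (for $p=\Theta(1/n)$ this is just the fact that $G(n,c/n)$ has linear independence number of order $\tfrac{\log c}{c}\cdot n$). Since $n-\tau(G)\ge\alpha(G)$ always, this also yields $n-\tau(G)=\Omega(\log(np)/p)$ whp, so all that remains is to prove $\tau(G)\ge n-C\,\alpha(G)$ whp for some absolute constant $C$.

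So fix any partition of $E(G)$ into complete bipartite graphs $B_i=(X_i,Y_i)$, $i\in[t]$, with $|X_i|\le|Y_i|$. Refine each $B_i$ into the $|X_i|$ stars centred at the vertices of $X_i$ and then merge stars that share a centre; this produces a decomposition of $G$ into stars whose centre set is contained in $\bigcup_i X_i$ and must be a vertex cover, so $|\bigcup_i X_i|\ge n-\alpha(G)$. Put $W=\bigcup_{i:\,|X_i|\ge 2}X_i$. Every vertex of $(\bigcup_i X_i)\setminus W$ is a singleton side $\{v\}=X_i$ of some original $B_i$, i.e.\ the centre of an honest star, and distinct such vertices give distinct bicliques; hence $t\ge|\bigcup_i X_i|-|W|\ge n-\alpha(G)-|W|$. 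Thus everything reduces to showing $|W|=O(\alpha(G))$ (and it suffices to do this for a minimum partition, one with $t=\tau(G)$): in any bipartite partition of $G(n,p)$, only $O(\log(np)/p)$ vertices ever lie on a smaller side of size at least $2$.

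For sparse $p$ this is immediate. Each $v\in W$ lies in a $B_i$ with $|X_i|,|Y_i|\ge 2$, so in a $4$‑cycle all of whose edges belong to $B_i$; taking $\lfloor|X_i|/2\rfloor$ vertex‑disjoint pairs inside each such $X_i$ produces pairwise edge‑disjoint copies of $C_4$ in $G$, whence $|W|=O(\#\{C_4\subseteq G\})=O((np)^4)$ whp. This is $O(\log(np)/p)$ exactly when $p\le n^{-4/5+o(1)}$ (equivalently $np\le n^{1/5-o(1)}$, which includes every fixed value of $np$), and in this range the theorem follows from the reduction above.

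The remaining range $n^{-4/5+o(1)}\le p\le c$ is the crux. There $G(n,p)$ has $\Omega((np)^4)\gg\alpha(G)$ copies of $C_4$, and for $p\ge n^{-1/2}$ even complete bipartite subgraphs $K_{2,m}$ with $m$ of order $np^2$, so counting $C_4$'s cannot bound $|W|$ — a bipartite \emph{packing} really could contain many large bicliques. The extra leverage is that a bipartite \emph{partition} is far more rigid: committing the $|X_i||Y_i|$ edges of $B_i$ to one biclique forbids every other biclique from reusing them, and in particular rules out the crossing configurations $x\in X_i\cap Y_j$, $y\in Y_i\cap X_j$ among the small sides. The plan is to couple this rigidity with the pseudorandom properties of $G(n,p)$ — the absence of $K_{a,b}$ with $\min(a,b)=\omega(\log n/\log(1/p))$, and the bound $O(np^a+a\log n)$ on the number of common neighbours of any $a$ vertices — and to argue iteratively, peeling off the bicliques with the largest smaller sides and charging their excess $|X_i|-1$ at each stage to distinct vertices of a maximum independent set of $G$. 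Making this charging work uniformly in $p$ — i.e.\ proving $\sum_{i:\,|X_i|\ge 2}(|X_i|-1)=O(\alpha(G))$ for a minimum bipartite partition of a pseudorandom graph — is the step I expect to be the main obstacle.
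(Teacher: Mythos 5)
Your upper bound and the reduction to bounding the set $W$ of vertices lying on small sides of size at least $2$ are fine, and the $C_4$-counting argument does dispose of the very sparse range (essentially the range covered by Proposition \ref{p13} in the paper). But the proof has a genuine, admitted gap exactly where the theorem lives: for $n^{-4/5+o(1)}\le p\le c$ you only offer a plan (``peel off the bicliques with largest smaller sides and charge their excess to an independent set, using pseudorandomness''), and you yourself flag the key inequality $\sum_{i:|X_i|\ge 2}(|X_i|-1)=O(\alpha(G))$ for a minimum partition as the main obstacle. Nothing in the proposal establishes it, and it is not clear that local pseudorandom properties (no large $K_{a,b}$, bounded codegrees) suffice: for constant $p$ the graph contains an abundance of induced $K_{2,m}$'s with $m=\Theta(\log n)$, so a deterministic charging of small sides to independent-set vertices needs a genuinely new mechanism that you do not supply. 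As it stands the statement is proved only for $p\le n^{-4/5+o(1)}$, which misses all constant $p\le c$ and most polynomial densities.

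The paper closes this range by a different, global probabilistic argument rather than a deterministic charging in a typical graph. First, a normalization (Lemma \ref{l34}, from Chung--Peng): among minimum decompositions take one maximizing the number of stars; then $\tau(G)=n-|U|+\tau'(G[U])$ for some vertex set $U$, where $\tau'$ counts edge-disjoint \emph{nontrivial} complete bipartite graphs covering all edges of the induced subgraph. Second, a first-moment/union bound over all candidate sets $U$: for a $k$-set with $kp\ge C\log k$, the probability that $G[U]$ can have a quarter of its (whp $\Theta(pk^2)$) edges covered by at most $2k$ edge-disjoint nontrivial bicliques is at most $2^{-a\,p\log(1/p)k^2}$, because specifying a nontrivial biclique with $m\in[pk/16,pk/4]$ edges costs far fewer than $m\log(1/p)$ bits (the entropy computation of Lemma \ref{l31}), while its appearance costs probability $p^{m}$. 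This beats the ${n\choose k}\le 2^{k\log n}$ union bound as soon as $k\ge \frac{2\log n}{ap}$, so whp either $|U|=O(\log n/p)$, giving $\tau(G)\ge n-O(\log(np)/p)$, or $\tau'(G[U])>2|U|$, giving $\tau(G)>n$. In other words, the missing idea in your proposal is to exploit the rigidity of a partition not through structural properties holding whp in $G(n,p)$, but through an exponentially small probability bound, over the random graph itself, for the event that \emph{any} large induced subgraph admits a cheap cover by nontrivial bicliques; to complete your argument you would either need to import this union-bound step or actually prove the charging claim, which the proposal leaves open.
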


For
very sparse graphs, that is, for $p = o(n^{-7/8})$, it is not
difficult to give a precise expression for the typical value of
$\tau(G)$. For a graph $H$ in which every connected component is
either an isolated vertex or a cycle of length $4$, let 
$\gamma(H)$ denote the number of vertices of $H$ minus the number
of cycles of length $4$ in it. 
\begin{prop}
\label{p13}
If $p=o(n^{-7/8})$ then for $G=G(n,p)$, whp, $\tau(G)=n-max
(\gamma(H))$, where the maximum is taken over all induced subgraphs
of $G$ in which any connected component is either a vertex or a
cycle of length $4$.
\end{prop}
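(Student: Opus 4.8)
The plan is to first describe the local structure of $G=G(n,p)$ when $p=o(n^{-7/8})$, and then reduce the proposition to a deterministic identity about graphs with that structure. The structural facts I would establish --- each with high probability and each by a routine first moment estimate --- are: \emph{(S1)} $G$ contains no $K_{2,3}$, equivalently every complete bipartite subgraph of $G$ is a star $K_{1,s}$ or a $4$-cycle $K_{2,2}$; \emph{(S2)} every $4$-cycle of $G$ is induced (that is, $G$ has no copy of $K_4$ minus an edge); and \emph{(S3)} the $4$-cycles of $G$ are pairwise vertex disjoint. The relevant expected numbers of copies are $O(n^5p^6)$ for $K_{2,3}$, $O(n^4p^5)$ for $K_4$ minus an edge, and $O(n^7p^8)$ for two $4$-cycles sharing a vertex, all of which tend to $0$ when $p=o(n^{-7/8})$; the last one is the binding constraint, and this is exactly where the exponent $7/8$ comes from (note that $n^{-7/8}=o(n^{-5/6})$ and $n^{-7/8}=o(n^{-4/5})$).

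The easy direction needs no randomness: if $H$ is an induced subgraph of $G$ all of whose connected components are single vertices or $4$-cycles, then $\tau(G)\le n-\gamma(H)$ --- order $V$ so that $V(H)$ comes last, and decompose $E(G)$ into the star of ``forward'' edges at each of the $n-|V(H)|$ vertices outside $V(H)$ together with the $K_{2,2}$ of each $4$-cycle component of $H$; these are pairwise edge disjoint, cover $E(G)$, and number $n-\gamma(H)$. For the reduction, assume (S1) and (S3). In any partition $\FF$ of $E(G)$ into complete bipartite subgraphs, each member is, by (S1), a star or a $K_{2,2}$; the $K_{2,2}$'s form a family $\MM$ of $4$-cycles of $G$, pairwise vertex disjoint by (S3) (and induced, by (S2)); and the stars partition $E(G)\setminus E(\MM)$, hence number at least $n-\alpha(G-E(\MM))$ (the minimum number of stars into which a graph's edge set can be partitioned equals its vertex cover number --- send each edge to an endpoint in a fixed minimum vertex cover). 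Conversely, any such $\MM$ together with an optimal star partition of the remaining edges is a legal decomposition of size $|\MM|+n-\alpha(G-E(\MM))$. Hence
\[
n-\tau(G)\;=\;\max_{\MM}\Big(\alpha\big(G-E(\MM)\big)-|\MM|\Big),
\]
the maximum being over all families $\MM$ of pairwise vertex-disjoint $4$-cycles of $G$.

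It then remains to prove the deterministic identity $\max_{\MM}\big(\alpha(G-E(\MM))-|\MM|\big)=\max_H\gamma(H)$ for every $G$ satisfying (S1)--(S3). The inequality ``$\ge$'' is immediate, since the $4$-cycles of an admissible $H$ form an $\MM$ with $\alpha(G-E(\MM))\ge|V(H)|$, so $\alpha(G-E(\MM))-|\MM|\ge|V(H)|-|\MM|=\gamma(H)$. For ``$\le$'' I would take a family $\MM$ attaining the left-hand maximum with $|\MM|$ as small as possible, together with a maximum independent set $J$ of $G-E(\MM)$, and prove the key claim that then $V(Q)\subseteq J$ for every $Q\in\MM$. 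This follows by ruling out $|V(Q)\cap J|\in\{0,1,2,3\}$ via exchange arguments: if $|V(Q)\cap J|\le 1$ or $V(Q)\cap J$ is a pair of opposite vertices of the $4$-cycle $Q$, then $J$ remains independent after the edges of $Q$ are added back, so deleting $Q$ from $\MM$ strictly increases the objective, a contradiction; and if $V(Q)\cap J$ is a pair of adjacent vertices of $Q$, or three vertices of $Q$ --- in which case exactly one vertex $q\in V(Q)\cap J$ has both of its $Q$-neighbours in $J$ --- then deleting from $J$ the vertex $q$ (respectively, one of the two adjacent vertices) restores independence after the edges of $Q$ are added back, so $(\MM\setminus\{Q\},\,J\setminus\{q\})$ attains the same objective value with a strictly smaller family, contradicting minimality of $|\MM|$. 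Granting the claim, put $W=\bigcup_{Q\in\MM}V(Q)$ and $I=J\setminus W$; since every edge of $G$ inside $J$ lies in $E(\MM)$, vertex-disjointness of the $4$-cycles forces $I$ to be independent in $G$, no vertex of $I$ to be adjacent to $W$, and no edge of $G$ to run between two distinct $4$-cycles of $\MM$. Hence the $4$-cycles of $\MM$ (induced, by (S2)) and the vertices of $I$ span an admissible induced subgraph $H$ with $\gamma(H)=3|\MM|+|I|=|J|-|\MM|$, which equals the left-hand side; together with the displayed formula this yields $\tau(G)=n-\max_H\gamma(H)$.

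I expect the main obstacle to be this last step --- specifically, the four exchange arguments establishing that in a smallest objective-maximizing family $\MM$ every $4$-cycle lies entirely inside the independent set $J$. It is exactly here that (S3) is used: deleting a vertex of one $4$-cycle from $J$ must not affect the analysis of the other $4$-cycles, which is why one genuinely needs $p=o(n^{-7/8})$ and not merely $p=o(n^{-5/6})$ (the latter already gives (S1)). The probabilistic first step and the reduction are routine.
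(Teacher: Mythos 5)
Your proposal is correct, but it takes a genuinely different route from the paper. The paper's proof is essentially one paragraph: it invokes Lemma \ref{l34} (Chung--Peng's Lemma 14, proved by taking an optimal decomposition with the maximum number of stars), which gives $\tau(G)=n-|U|+\tau'(G[U])$ for some $U\subseteq V$; then the same whp structural facts you list (the only nontrivial complete bipartite subgraphs are $C_4$'s, and no two $C_4$'s share a vertex) force every component of $G[U]$ to be an isolated vertex or an induced $C_4$, so $\tau'(G[U])$ is the number of $C_4$-components and $\tau(G)=n-\gamma(G[U])$, which together with the easy upper bound $\tau(G)\le n-\gamma(H)$ finishes the proof. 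You bypass Lemma \ref{l34} entirely: from (S1)--(S3) you derive the exact formula $n-\tau(G)=\max_{\MM}(\alpha(G-E(\MM))-|\MM|)$ via the star-partition/vertex-cover duality, and then prove the deterministic identity with $\max_H\gamma(H)$ by the exchange argument on a maximizing pair $(\MM,J)$ with $|\MM|$ minimal; your case analysis there is sound, and the final assembly of $H$ from $W=\bigcup_{Q\in\MM}V(Q)$ and $I=J\setminus W$ is correct. What the paper's route buys is brevity, since Lemma \ref{l34} already encapsulates the optimization over which vertices carry stars; what yours buys is self-containedness --- in effect your exchange argument re-proves the special case of that lemma needed here. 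One small remark: the vertex-disjointness (S3) is really used when you extract $\MM$ from an arbitrary decomposition and when you assemble $H$ at the end (no edges between distinct cycles, chords excluded), rather than in the exchange steps themselves, which only involve edges of the single cycle $Q$ being removed; this does not affect correctness, and the binding first-moment constraint is indeed the $n^7p^8$ count for two $4$-cycles meeting in one vertex.
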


The rest of this paper  contains the proofs.
Theorem \ref{t11} is proved in Section 2. Part (i) 
is established using the second moment method and
parts (ii) and (iii)  are proved by applying the 
Stein-Chen method. 

Theorem \ref{t12} is proved in Section 3 by combining an appropriate first
moment computation
with some combinatorial arguments. Section 4 contains several concluding
remarks as well as the
simple  proof of Proposition  \ref{p13}.

Throughout the rest of the paper we assume, whenever this is needed, that
$n$ is sufficiently large. 
All logarithms are in base $2$, unless otherwise specified.
\section{Random graphs}\,

In this section we consider $G=G(n,0.5)$ and prove Theorem \ref{t11}. 

We start with the proof
of part (i), which implies that for most values of $n$,
$\tau(G) \leq n-\alpha(G)-1$. Here "most" means that if we take a
random uniform integer $n$ in $[1,M]$, then the probability that
for this $n$ the assumptions in part (i) hold tend to $1$ as $M$
tends to infinity.

The proof of part (i) is based on the second moment method. Let
$V=\{1,2, \ldots ,n\}$ be a fixed set of $n$ labeled vertices,
and let $G=G(n,0.5)=(V,E)$ be the  random graph on $V$. Let 
$f(k)={n \choose k}2^{-{k \choose 2}}$ be
the expected number of independent sets of
size $k$ in $G$, and let $k_0$ be, as in the introduction, the
largest $k$ so that $f(k) \geq 1$. Suppose that the assumption in
Theorem \ref{t11}, part (i)  holds. This means that the expected
number of independent sets of size $k_0+1$ in $G$ is $o(1)$ and
hence, by Markov's Inequality, the probability that there is such
an independent set if $o(1)$. The assumption also implies that the
expected number of independent sets of size $k_0$ tends to
infinity. It is known (c.f., e.g., \cite{AS}, Theorem 4.5.1)
that in this case $\alpha(G)=k_0$ whp. For completeness we include
the relevant computation, which will be used later as well.

Suppose  $k=(1+o(1))2 \log_2 n$. 
For each $K \subset V$, $|K|=k$, let $X_K$ be the indicator random
variable whose value is $1$ iff $K$ is an independent set in $G$.
Let $X=\sum_K X_K$, where $K$ ranges over all subsets of size $k$
of $V$,  be the total number of independent sets of size
$k$ in $G$. The expectation of this random variable is  clearly
$E(X)=f(k)={n \choose k}2^{-{k \choose 2}}$. We proceed to estimate
its variance. For $K,K' \subset V$, $|K|=|K'|=k$, let $K \sim K'$
denote that $|K \cap K'| \geq 2$ (and $K \neq K'$). The variance
of $X$ satisfies:
$$
\mbox{Var}(X)=\sum_K \mbox{Var}(X_K)+\sum_{K \sim K'}
\mbox{Cov}(X_K,X_{K'}) \leq E(X) +\sum_{K \sim K'} E(X_K X_{K'}),
$$
where $K,K'$ range over all ordered pairs of subsets of size $k$ of
$V$ satisfying $2 \leq |K \cap K'| \leq k-1$.
Note that
$$
\sum_{K \sim K'} E(X_K X_{K'}) 
=\sum_{i=2}^{k-1} {n \choose k} { k \choose i}
{{n-k} \choose {k-i}}2^{-2{k \choose 2} + {i \choose 2}}= 
\sum_{i=2}^{k-1} f_i,
$$
where here
$$
f_i =
{n \choose k} { k \choose i}
{{n-k} \choose {k-i}}2^{-2{k \choose 2} + {i \choose 2}}
$$
is the contribution to the sum $\sum_{K \sim K'} E(X_K X_{K'})$
arising from ordered pairs $K,K'$ whose intersection is of size
$i$.

Without trying to get here the best possible estimate, we consider
two possible ranges for the parameter $i$, as follows.
\vspace{0.2cm}

\noindent
{\bf Case 1:}\, If $2 \leq i \leq 2k/3$ then
$$
\frac{f_i}{f(k)^2} = \frac{{k \choose i} {{n-k} \choose {k-i}}}{{n
\choose k}}  2^{{i \choose 2}} \leq k^i (\frac{k}{n})^i 2^{{i
\choose 2}} =(\frac{k^2 2^{(i-1)/2}}{n})^i \leq \frac{1}{n^{0.3i}}.
$$
Here we used the facts that $k=(1+o(1)) 2 \log_2 n$ and
$i \leq 2k/3$ to conclude that 
$$
\frac{k^2 2^{(i-1)/2}}{n} \leq \frac{1}{n^{1/3-o(1)}}.
$$
\vspace{0.2cm}

\noindent
{\bf Case 2:}\, If $i=k-j,~ 1 \leq j \leq k/3$, then
$$
\frac{f_i}{f(k)} = {k \choose j} {{n-k} \choose {j}}
 2^{-{k \choose 2}+{i \choose 2}} \leq k^j n^j 2^{-j(k-j)}
\leq (kn2^{-(k-j)})^j \leq \frac{1}{n^{0.3j}}.
$$
We have thus proved the following.
\begin{lemma}
\label{l21}
With the notation above, if $k=(1+o(1)) 2 \log_2 n$ and $i \leq
2k/3$,
then $f_i \leq f(k)^2 \frac{1}{n^{0.3i}}$. If 
$k=(1+o(1)) 2 \log_2 n$ and $i =k-j, ~j \leq k/3$, then
$f_i \leq f(k) \frac{1}{n^{0.3j}}.$ Therefore, if $f(k) \geq
\Omega(1)$ then 
$\sum_{K \sim K'} E(X_KX_{K'}) =o(f(k)^2)$ and 
$\mbox{Var}(X) \leq E(X)
+o(f(k)^2)=E(X)+o((E(X)^2).$
\end{lemma}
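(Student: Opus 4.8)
The plan is to read off the two displayed inequalities directly from the Case~1 and Case~2 computations carried out just above, and then to assemble them into the variance estimate by bounding $\sum_{i=2}^{k-1} f_i$ with two geometric series.

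The first assertion is exactly the conclusion of Case~1: for $2 \le i \le 2k/3$ the chain of inequalities displayed there shows $f_i/f(k)^2 \le n^{-0.3i}$, using only $k = (1+o(1))2\log_2 n$. The second assertion is exactly the conclusion of Case~2: for $i = k-j$ with $1 \le j \le k/3$ one gets $f_i/f(k) \le n^{-0.3j}$, again using only the asymptotics of $k$. I would note that the ranges $[2, 2k/3]$ and $[2k/3, k-1]$ overlap at $i = 2k/3$ and together exhaust all indices $i$ with $2 \le i \le k-1$, so every term of $\sum_{i=2}^{k-1} f_i$ is covered by one of the two bounds.

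Now assume $f(k) = \Omega(1)$. I would split $\sum_{i=2}^{k-1} f_i$ at $i = 2k/3$. For the lower part the first bound gives $\sum_{2 \le i \le 2k/3} f_i \le f(k)^2 \sum_{i \ge 2} n^{-0.3i} = O(f(k)^2 n^{-0.6}) = o(f(k)^2)$. For the upper part, substituting $i = k-j$, the second bound gives $\sum_{1 \le j \le k/3} f_i \le f(k) \sum_{j \ge 1} n^{-0.3j} = O(f(k) n^{-0.3})$; since $f(k)$ is bounded below by a positive constant, $f(k) n^{-0.3} = o(f(k)^2)$, so this part is $o(f(k)^2)$ as well. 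Adding the two pieces gives $\sum_{K \sim K'} E(X_K X_{K'}) = \sum_{i=2}^{k-1} f_i = o(f(k)^2)$.

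Finally I would substitute this into the variance decomposition already displayed, $\mathrm{Var}(X) = \sum_K \mathrm{Var}(X_K) + \sum_{K \sim K'} \mathrm{Cov}(X_K, X_{K'}) \le E(X) + \sum_{K \sim K'} E(X_K X_{K'}) \le E(X) + o(f(k)^2)$, and use $E(X) = f(k)$ to rewrite the last term as $o((E(X))^2)$. There is no serious obstacle in this argument; the only points demanding care are checking that the two index ranges really do cover $\{2, \dots, k-1\}$ and observing that the control of the second (upper-range) sum is precisely where the hypothesis $f(k) = \Omega(1)$ enters, since without a lower bound on $f(k)$ the estimate $O(f(k) n^{-0.3})$ would not obviously be $o(f(k)^2)$.
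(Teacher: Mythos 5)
Your proposal is correct and follows essentially the same route as the paper: the lemma is simply the record of the Case~1 and Case~2 computations displayed just before it, and the paper likewise sums the two ranges (which together cover $2\le i\le k-1$) as geometric series, using $f(k)=\Omega(1)$ to absorb the $f(k)n^{-0.3}$ term into $o(f(k)^2)$ and then plugging into the variance decomposition.
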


Next we consider induced complete bipartite graphs in the random
graph $G=G(n,0.5)$ on $V$. Let $k=(1+o(1))2 \log_2 n$ satisfy
$n=\Theta(k 2^{k/2})$ and recall that this holds for $k=k_0(n)$ 
defined as the largest integer $k$ so that $f(k) \geq 1$. For any
subset $B \subset V$ of size $|B| =k+2$ let $Y_B$ denote the
indicator random variable whose value is $1$ iff the induced
subgraph of $G$ on $B$ is a complete bipartite graph. Define
$Y=\sum_B Y_B$, as $B$ ranges over all subsets of size $k+2$ of
$G$, and note that this is the number of induced complete bipartite
subgraphs of $G$ of size $k+2$. Denote the expected value of $Y$ by
$g(k)$ and note that
$$
E(Y)=g(k)={n \choose {k+2}}(2^{k+1}-1)2^{-{{k+2} \choose 2}}.
$$
Indeed, there are ${n \choose {k+2}}$ subsets $B$ of $k+2$ vertices,
in each such subset there are $2^{k+1}-1$ ways to partition it into
two nonempty vertex classes, and the probability that the induced
subgraph on $B$ is a complete bipartite graph on these two vertex
classes is $2^{-{{k+2} \choose 2}}$.

Since by assumption $n=\Theta(k 2^{k/2})$ it follows that
\begin{equation}
\label{e21}
g(k)=f(k) \frac{(n-k)(n-k-1)}{(k+2)(k+1)} (2^{k+1}-1)
2^{-2k-1}=\Theta(f(k)).
\end{equation}

To compute the variance of $Y$ let $B \sim B'$ denote,
for two subsets
$B,B' \subset V$, each of cardinality $k+2$ , that $2 \leq |B \cap
B'| $ and $B \neq B'$. Then
$$
\mbox{Var}(Y) \leq E(Y) + \sum_{B \sim B'} \mbox{Cov}(Y_B,Y_{B'})
\leq E(Y) + \sum_{B \sim B'} E(Y_BY_{B'}).
$$
Now,
$$
\sum_{B \sim B'} E(Y_B Y_{B'}) \leq \sum_{i=2}^{k+1} {n \choose
{k+2}} (2^{k+1}-1)2^{-{{k+2} \choose 2}} {{k+2} \choose i}
{{n-k-2} \choose {k+2-i}} 2^{k+2-i} 2^{-{{k+2} \choose 2} +{i
\choose 2}}=\sum_{i=2}^{k+1} g_i,
$$
where 
$$
g_i=
{n \choose
{k+2}} (2^{k+1}-1)2^{-{{k+2} \choose 2}} {{k+2} \choose i}
{{n-k-2} \choose {k+2-i}} 2^{k+2-i} 2^{-{{k+2} \choose 2} +{i
\choose 2}}
$$
is the contribution from pairs $B,B'$ with intersection of size
$i$.

We bound the terms $g_i$ as done for the
quantities $f_i$ before.
\vspace{0.2cm}

\noindent
{\bf Case 1:}\, If $2 \leq i \leq 2k/3+2$ then
$$
\frac{g_i}{g(k)^2} <(k+2)^i (\frac{k+2}{n})^i 2^{{i \choose 2}}
=(\frac{(k+2)^2 2^{(i-1)/2}}{n})^i 
\leq \frac{1}{n^{0.3i}}.
$$
\vspace{0.2cm}

\noindent
{\bf Case 2:}\, If $i=k+2-j,~1 \leq j \leq k/3$, then
$$
\frac{g_i}{g(k)} = {{k+2} \choose j} {{n-k-2} \choose {j}}2^j
 2^{-{{k+2} \choose 2}+{i \choose 2}} \leq 
((k+2)2n2^{-(k+2-j)})^j \leq \frac{1}{n^{0.3j}}.
$$
We have thus obtained the following.
\begin{lemma}
\label{l22}
With the notation above, if $
n=\Theta(k 2^{k/2})$ and $i \leq
2k/3+2$,
then $g_i \leq g(k)^2 \frac{1}{n^{0.3i}}$. If 
$n=\Theta(k 2^{k/2})$ and $i =k+2-j, ~j \leq k/3$, then
$g_i \leq g(k) \frac{1}{n^{0.3j}}.$ Therefore, if $g(k) \geq
\Omega(1)$ then 
$\sum_{B \sim B'} E(Y_B Y_{B'}) =o(g(k)^2)$ and 
$\mbox{Var}(Y) \leq E(Y)
+o(g(k)^2)=E(Y)+o((E(Y)^2).$
\end{lemma}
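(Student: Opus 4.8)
The plan is to repeat, for $Y$, the second moment estimate carried out for $X$ in Lemma~\ref{l21}, the only genuinely new ingredient being the factors that count bipartitions. First I would justify the displayed expression for $g_i$: the ordered pairs $B,B'$ of $(k+2)$-subsets of $V$ with $|B\cap B'|=i$ number $\binom{n}{k+2}\binom{k+2}{i}\binom{n-k-2}{k+2-i}$; there are $2^{k+1}-1$ bipartitions of $B$, and once one of them is fixed there are at most $2^{k+2-i}$ bipartitions of $B'$ agreeing with it on $B\cap B'$ (the $k+2-i$ vertices of $B'\setminus B$ are free, the rest determined); finally, given the two prescribed bipartitions, the induced graphs on $B$ and on $B'$ realize them with probability $2^{-\binom{k+2}{2}}\cdot 2^{-\binom{k+2}{2}+\binom{i}{2}}$, the correction $2^{\binom{i}{2}}$ coming from the $\binom{i}{2}$ pairs inside $B\cap B'$ common to both. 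Multiplying these bounds gives $g_i$, hence $\mbox{Var}(Y)\le E(Y)+\sum_{B\sim B'}E(Y_BY_{B'})\le E(Y)+\sum_{i=2}^{k+1}g_i$.

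Next I would invoke the two per-term estimates obtained just above the statement, exactly as was done for the $f_i$: for $2\le i\le 2k/3+2$ one has $g_i/g(k)^2\le\big((k+2)^2 2^{(i-1)/2}/n\big)^i\le n^{-0.3i}$ using $k=(1+o(1))2\log_2 n$; and for $i=k+2-j$ with $1\le j\le k/3$ one has $g_i/g(k)\le\big(2(k+2)n\,2^{-(k+2-j)}\big)^j\le n^{-0.3j}$ using $n=\Theta(k2^{k/2})$. The one small point worth checking explicitly is that these two ranges of $i$ together exhaust $\{2,\dots,k+1\}$: the first reaches $i\approx 2k/3+2$, and as $j$ runs from $1$ to $\lfloor k/3\rfloor$ the second covers $i$ from $k+1$ down to $k+2-\lfloor k/3\rfloor\le\lfloor 2k/3\rfloor+3$, so no value of $i$ is omitted.

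Finally the two sums are geometric: $\sum_{2\le i\le 2k/3+2}g_i\le g(k)^2\sum_{i\ge 2}n^{-0.3i}=O(g(k)^2 n^{-0.6})=o(g(k)^2)$, and $\sum_{1\le j\le k/3}g_{k+2-j}\le g(k)\sum_{j\ge1}n^{-0.3j}=O(g(k)n^{-0.3})=o(g(k))$; since $g(k)=\Omega(1)$ by hypothesis we have $g(k)=O(g(k)^2)$, so both tails are $o(g(k)^2)$, giving $\sum_{B\sim B'}E(Y_BY_{B'})=o(g(k)^2)$ and therefore $\mbox{Var}(Y)\le E(Y)+o(g(k)^2)=E(Y)+o(E(Y)^2)$ because $E(Y)=g(k)$. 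I do not foresee a real obstacle: the computation mirrors Lemma~\ref{l21} line for line. The only place that needs attention is the covariance bookkeeping in the first paragraph — in particular getting the bipartition factor $2^{k+2-i}$ for $B'$ right — and observing that the highly correlated pairs ($i=k+1$ or $i=k$, i.e. $j=1,2$) cause no trouble, since $n=\Theta(k2^{k/2})$ already forces $2(k+2)n\,2^{-(k+2-j)}\le n^{-1+o(1)}\le n^{-0.3}$ for such $j$.
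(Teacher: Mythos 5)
Your proposal is correct and follows essentially the same route as the paper: the same counting of ordered pairs $B,B'$ with the bipartition factors $(2^{k+1}-1)$ and $2^{k+2-i}$ and probability $2^{-2\binom{k+2}{2}+\binom{i}{2}}$, the same two ranges $i\le 2k/3+2$ and $i=k+2-j$, $j\le k/3$ (which indeed cover all of $\{2,\dots,k+1\}$), and the same geometric summation using $g(k)=\Omega(1)$ to absorb the $o(g(k))$ terms into $o(g(k)^2)$.
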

\vspace{0.2cm}

\noindent
{\bf Proof of Theorem \ref{t11}, part (i):}\,
Since $f(k_0+1)=o(1)$ the expected number of independent sets of
size $k_0+1$ is $o(1)$ and hence, by Markov, with probability
$1-o(1)$,  $\alpha(G) < k_0+1$. On the other hand, as $f(k_0)$ tends 
to infinity we conclude, by Lemma \ref{l21}, that the random
variable $X$ which counts the number of independent sets of size
$k_0$  in $G$ has expectation $f(k_0)$ which tends to infinity, and
variance $o(f(k_0)^2)$. Thus, by Chebyshev's Inequality, $X$ is
positive whp, and therefore $\alpha(G) \geq k_0$ (and hence 
$\alpha(G)=k_0$) whp.

The situation with $Y$ is similar. By (\ref{e21})
$g(k_0)=\Theta(f(k_0))$ and
$g(k_0+1)=\Theta(f(k_0+1))$. Therefore, by assumption,
$g(k_0+1)=o(1)$ and hence $\beta(G)<(k_0+1)+2=k_0+3$ whp.  
On the other hand,
by Lemma \ref{l22}, and since by assumption $g(k_0)=\Theta(f(k_0))$
tends to infinity, we conclude, by Chebyshev's Inequality, that 
$\beta(G) \geq k_0+2$ whp. Thus $\beta(G)=k_0+2$ whp, implying
the assertion of part (i).
\vspace{0.2cm}

We proceed with the proof of part (ii) (the proof of part (iii) is
essentially identical). This is done by applying the Stein-Chen
method, which is a method that can show that certain random
variables can be approximated well by Poisson random variables. 
It is in fact possible to apply the two-dimensional method
(see, for example, \cite{BHJ}, Corollary 10.J.1) to show that if
$f(k_0)=\Theta(1)$ (and hence also $g(k_0)=\Theta(1))$, then the
two random variables $X$, which counts the number of independent
sets of size $k=k_0$, and $Y$, which counts the number of induced
complete bipartite subgraphs of size $k_0+2$, behave
approximately like independent Poisson random variables with 
expectations $E(X)$ and $E(Y)$. In particular, each of the four
events 
\begin{equation}
\label{e22}
E_{11}=\{X>0,Y>0\},~ E_{10}=\{X>0,Y=0\},~ E_{01}=\{X=0,
Y>0\}~ E_{00}=\{X=0,Y=0\}
\end{equation}
are obtained with probability bounded away from $0$ and $1$.
However, the same conclusion can be derived using the one
dimensional method, since it suffices to show that $X$, $Y$ and
their sum $X+Y$ are all approximately Poisson.  This suffices
to show that if $E(X)=\lambda$ and $E(Y)=\mu$, then
the probability that $X=0$ is $(1+o(1))e^{-\lambda}$, the
probability that $Y=0$ is $(1+o(1))e^{-\mu}$ and the probability
that $X+Y=0$ (which is exactly the probability that $X=Y=0$, as
both are nonnegative integers) is $(1+o(1))e^{-\lambda-\mu}$. This will
enable one to compute the probabilities of all four events 
$E_{ij}$ in (\ref{e22}) above and establish the conclusion of
Theorem \ref{t11}, part (ii).

The details follow. We start with a statement of the Stein-Chen
method in a simple form that suffices for our purpose here. This
is the version that appears in \cite{JLR}, Theorem 6.23.

Let $\{I_{\alpha}\}_{\alpha \in \FF}$ be a (finite) family of 
indicator random variables. A graph $L$ 
on the set of vertices $\FF$ 
is a dependency graph for this family if 
for any two disjoint subsets $\MM_1$ and $\MM_2$ of $\FF$ with no
edges of $L$ between them, the families 
$\{I_{\alpha}\}_{\alpha \in \MM_1}$ and
$\{I_{\beta}\}_{\beta \in \MM_2}$ are mutually independent.
Thus, for example, if the family of indicator random variables
is the family of all ${n \choose k}$ variables $X_K$ considered in
the paragraphs preceding Lemma \ref{l21}, then the graph $L$ in
which $K,K'$ are adjacent iff $K \sim K'$, that is, iff
$2 \leq |K \cap K'| \leq k-1$, is a dependency graph. We need the
following version of the Stein-Chen method.
\begin{theo}[c.f., \cite{JLR}, Theorem 6.23]
\label{t23}
Let $\{I_{\alpha}\}_{\alpha \in \FF}$ be a (finite) family of 
indicator random variables with dependency graph $L$.
Put $X=\sum_{\alpha \in \FF} X_{\alpha}$, let $\pi_{\alpha}$ be the
expectation of $I_{\alpha}$ and let $\lambda=\sum_{\alpha \in \FF}
\pi_{\alpha}$ be the expectation of $X$. Then the total variation
distance between the distribution of $X$ and that of a Poisson
random variable $Po(\lambda)$ with expectation $\lambda$ satisfies
$$
d_{TV}(X, Po(\lambda)) \leq \mbox{min}(\lambda^{-1},1)
(\sum_{\alpha \in \FF} \pi_{\alpha}^2 +
\sum_{\alpha,\beta \in \FF, \alpha \beta \in E(L)} 
(E(I_{\alpha}I_{\beta})+E(I_\alpha)E(I_{\beta}) )),
$$
where the sum is over ordered pairs $\alpha,\beta$.
In particular, $|\mbox{Prob}(X=0)-e^{-\lambda}|$ is bounded by the
right hand side of the last inequality.
\end{theo}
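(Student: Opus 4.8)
\medskip\noindent
The plan is to run the standard Stein--Chen argument, which separates into one analytic ingredient --- a uniform bound on the solution of the Poisson Stein equation, which I would quote --- and one combinatorial ingredient --- an expansion driven by the dependency graph, which is routine and parallels the second moment manipulations already carried out above. Fix a set $A \subseteq \{0,1,2,\dots\}$ and let $Z$ be a $Po(\lambda)$ random variable; since for integer valued variables $d_{TV}(X,Po(\lambda)) = \sup_A |\mbox{Prob}(X \in A) - \mbox{Prob}(Z \in A)|$, it suffices to bound $|\mbox{Prob}(X \in A) - \mbox{Prob}(Z \in A)|$, for each fixed $A$, by the right hand side of the asserted inequality. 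To that end I introduce the function $g=g_A$ on the nonnegative integers, with $g(0)=0$, determined by the forward recursion solving the Stein equation
$$
\lambda\, g(j+1) - j\, g(j) = \mathbf{1}_A(j) - \mbox{Prob}(Z \in A) \qquad (j \ge 0) .
$$
Substituting $j=X$ and taking expectations gives $\mbox{Prob}(X \in A) - \mbox{Prob}(Z \in A) = E(\lambda g(X+1) - X g(X))$, so the task reduces to estimating the right hand side. The one fact about $g_A$ that I would take as a black box is the bound on its first difference,
$$
\norm{\Delta g_A} := \sup_{j \ge 0} |g_A(j+1)-g_A(j)| \;\le\; \frac{1-e^{-\lambda}}{\lambda} \;\le\; \mbox{min}(\lambda^{-1},1) ,
$$
which is proved by writing $g_A(j+1)$ explicitly in terms of the Poisson point probabilities and estimating it, and which is exactly where the saving factor $\mbox{min}(\lambda^{-1},1)$ is produced.

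For the combinatorial step I would exploit the dependency graph $L$ in the same spirit as in Lemmas \ref{l21} and \ref{l22}. Write $\lambda = \sum_{\alpha \in \FF} \pi_\alpha$ and $X = \sum_{\alpha \in \FF} I_\alpha$, and for each $\alpha$ let $B_\alpha$ denote the closed neighbourhood of $\alpha$ in $L$; set $Z_\alpha = \sum_{\beta \in B_\alpha} I_\beta$ and $W_\alpha = X - Z_\alpha$, so that by the definition of a dependency graph $W_\alpha$ is independent of the family $\{I_\beta : \beta \in B_\alpha\}$, and in particular of $I_\alpha$. Then
$$
E(\lambda g(X+1) - X g(X)) = \sum_{\alpha \in \FF} \bigl( \pi_\alpha E(g(X+1)) - E(I_\alpha g(X)) \bigr) ,
$$
and I rewrite each summand by inserting the intermediate term $\pi_\alpha E(g(W_\alpha+1)) = E(I_\alpha g(W_\alpha+1))$, the equality being exactly the independence of $I_\alpha$ and $W_\alpha$. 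Since $X = W_\alpha + Z_\alpha$, the $\alpha$-th summand becomes
$$
\pi_\alpha \bigl( E(g(W_\alpha+Z_\alpha+1)) - E(g(W_\alpha+1)) \bigr) \;+\; E\bigl( I_\alpha ( g(W_\alpha+1) - g(W_\alpha+Z_\alpha) ) \bigr) .
$$
The first bracket is a telescoping sum of at most $Z_\alpha$ consecutive increments of $g$, so it is at most $\norm{\Delta g_A}\, Z_\alpha$ in absolute value; on the event $I_\alpha=1$ the second bracket is a telescoping sum of $Z_\alpha-1$ increments, so it is at most $\norm{\Delta g_A}(Z_\alpha-1)$. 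Taking expectations and using $E(Z_\alpha) = \sum_{\beta \in B_\alpha} \pi_\beta$ together with $E(I_\alpha(Z_\alpha-1)) = \sum_{\beta \in B_\alpha,\, \beta \ne \alpha} E(I_\alpha I_\beta)$ (recall $I_\alpha^2 = I_\alpha$), the diagonal contributions $\beta=\alpha$ add up to $\sum_\alpha \pi_\alpha^2$ and the off-diagonal ones to $\sum_{\alpha\beta \in E(L)} \bigl( E(I_\alpha)E(I_\beta) + E(I_\alpha I_\beta) \bigr)$, summed over ordered pairs. Multiplying by $\norm{\Delta g_A} \le \mbox{min}(\lambda^{-1},1)$ gives the stated bound on $d_{TV}(X,Po(\lambda))$, and taking $A=\{0\}$, where $\mbox{Prob}(Z \in A) = e^{-\lambda}$, yields the final assertion about $|\mbox{Prob}(X=0) - e^{-\lambda}|$.

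The genuine obstacle here is the analytic lemma $\norm{\Delta g_A} \le \mbox{min}(\lambda^{-1},1)$: it has nothing to do with the dependency structure and must be obtained from a direct analysis of the Poisson Stein recursion, and it is precisely this factor that keeps the estimate strong enough to be useful when $\lambda$ remains bounded, as it does in the intended application (where $f(k_0)=\Theta(1)$ and hence $E(X), E(Y) = \Theta(1)$). Everything after that lemma --- the insertion of the intermediate term, the telescoping bounds, and the bookkeeping of diagonal versus edge contributions --- is mechanical and structurally identical to the first and second moment computations performed above for $X$ and $Y$. In practice I would simply cite \cite{JLR} (or \cite{BHJ}) for the whole statement, but the argument sketched above is what underlies it.
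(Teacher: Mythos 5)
Your proposal is correct, but note that the paper does not prove this statement at all: Theorem \ref{t23} is quoted verbatim as an external black box (\cite{JLR}, Theorem 6.23, equivalently \cite{BHJ}), and the paper's only contribution around it is to verify its hypotheses for the specific families $\{X_K\}$, $\{Y_B\}$ and their union. What you supply is the standard local-dependence Stein--Chen proof of that cited result: the Stein equation $\lambda g(j+1)-jg(j)=\mathbf{1}_A(j)-\mbox{Prob}(Z\in A)$, the substitution $j=X$, the insertion of $\pi_\alpha E(g(W_\alpha+1))=E(I_\alpha g(W_\alpha+1))$ using independence of $I_\alpha$ from $W_\alpha=X-Z_\alpha$, and the telescoping estimates giving $\pi_\alpha E(Z_\alpha)$ and $E(I_\alpha(Z_\alpha-1))=\sum_{\beta\colon \alpha\beta\in E(L)}E(I_\alpha I_\beta)$, whose sums over $\alpha$ reproduce exactly the diagonal term $\sum_\alpha\pi_\alpha^2$ and the ordered-edge terms in the stated bound. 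Your bookkeeping is right (in particular the observation that on $\{I_\alpha=1\}$ one has $Z_\alpha\geq 1$, so the second telescoping sum has $Z_\alpha-1$ increments), and the one genuinely analytic ingredient, $\norm{\Delta g_A}\leq (1-e^{-\lambda})/\lambda\leq\min(\lambda^{-1},1)$, is correctly isolated; you leave it as a quoted lemma, which is legitimate since it is a standard fact about the Poisson Stein solution and is independent of the dependency structure. So your route is a genuine proof sketch of a result the paper only cites; what the paper's treatment buys is brevity, while yours makes the theorem self-contained at the cost of importing (or eventually proving) the increment bound for $g_A$. The final step, taking $A=\{0\}$ to get $|\mbox{Prob}(X=0)-e^{-\lambda}|$, matches how the theorem is used in the proofs of (\ref{e24})--(\ref{e26}).
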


We can now proceed with the proof of Theorem \ref{t11}, part (ii).
Let $G=G(n,1/2)$ be the random graph on $V=\{1,2,\ldots ,n\}$, let
$k_0$ be as in Theorem \ref{t11}, and
suppose that the assumption of part (ii) holds, that is
$f(k_0)=\Theta(1)$. Let $X=\sum_{K} X_K$ be, as before, the number 
of independent  sets of size $k=k_0$ in $G$, then $E(X)=f(k_0)$.
Put $f(k_0)=\lambda$. As noted before, the graph on
the $k$-subsets $K$ of $V$ in which $K,K'$ are adjacent iff 
$K \sim K'$ is a
dependency graph for the variables $X_K$. Put $\pi_K=E(X_K)=
2^{-{k \choose 2}}$.
By Theorem \ref{t23}:
\begin{equation}
\label{e23}
|\mbox{Prob}(X=0)-e^{-\lambda}| 
\leq 
\mbox{min}(\lambda^{-1},1)
(\sum_{K} \pi_{K}^2 +
\sum_{K \sim K'} 
(E(X_K X_{K'})+E(X_K)E(X_{K'}) )),
\end{equation}
where the first sum is over all $k$-subsets $K$ of $V$ and the
second is over ordered pairs of such subsets that satisfy
$K \sim K'$. 

Since $\pi_K =2^{-{k \choose 2}} =n^{-\Theta(\log n)}
=o(1)$,  it follows that
$$
\sum_{K} \pi_{K}^2=2^{-{k \choose 2}} \sum_{K} \pi_K=o(1) \lambda
=o(1).
$$
It is also easy to bound the sum
$$
\sum_{K \sim K'} E(X_K)E(X_{K'}) 
$$ 
as the fraction  of pairs $K,K'$ that satisfy $K \sim K'$ among all
pairs $K,K'$ is easily seen to be $\Theta(k^4/n^2)=o(1)$. Therefore
$$
\sum_{K \sim K'} E(X_K)E(X_{K'})=O(k^4/n^2) (\sum_K \pi_K)^2
=o(1) \lambda^2=o(1).
$$
It remains to bound the sum
$$
\sum_{K \sim K'} 
E(X_K X_{K'}).
$$
By Lemma \ref{l21} this is at most $o(\lambda^2)=o(1)$.

Plugging in (\ref{e23}) we conclude that
\begin{equation}
\label{e24}
\mbox{Prob}(X=0)=(1+o(1))e^{-\lambda}.
\end{equation}

A similar computation shows that for the random variable $Y$ that
counts the number of induced complete bipartite subgraphs of size
$k+2=k_0+2$ in $G$, whose expectation is $g(k_0)=\Theta(f(k_0))=
\Theta(1)$, which we denote by $\mu=g(k_0)$, we have
\begin{equation}
\label{e25}
\mbox{Prob}(Y=0)=(1+o(1))e^{-\mu}.
\end{equation}

Indeed, here $Y=\sum_{B} Y_B$ where $B$ ranges over all subsets of
cardinality $k+2$ of $V$ and $Y_B$ is the indicator random variable 
whose value is $1$ iff the induced subgraph on $B$ is a complete
bipartite graph.  A dependency graph here is obtained by having
$B,B'$ adjacent iff $B \sim B'$, that is, iff
$2 \leq |B \cap B'| \leq k+1$. One can thus apply Theorem \ref{t23}
and establish (\ref{e25}) by repeating the arguments in the proof
of (\ref{e24}), replacing Lemma \ref{l21} by Lemma \ref{l22}. 

Finally, we claim that the sum $X+Y$ can also be approximated well
by a Poisson random variable  with expectation $\lambda+\mu$ and
hence 
\begin{equation}
\label{e26}
\mbox{Prob}(X=Y=0)=
\mbox{Prob}(X+Y=0)=
(1+o(1))e^{-\lambda-\mu}.
\end{equation}

The reasoning here is similar, although it requires a slightly more
tedious computation. Here $X+Y=\sum_K X_k + \sum_B Y_B$ with
$X_K,Y_B$ as before. A dependency graph $L$ is obtained here by having
$K,K'$ adjacent iff $K \sim K'$, $B,B'$ adjacent iff
$B \sim B'$, and $K,B$ adjacent iff $2 \leq |K \cap B| \leq k$
(note that here the subset $B$ may fully contain the subset $K$).
Here $E(X_K)=\pi_K=o(1)$ and $E(Y_B)=\pi_B=o(1)$ and hence, as
before
$$
\sum_K \pi_K^2+\sum_B \pi_B^2 =o(1)(\lambda+\mu)=o(1).
$$
As before
$$
\sum_{KK' \in E(L)} 
E(X_K)E(X_{K'})=O(k^4/n^2) (\sum_K \pi_K)^2
=o(1) \lambda^2=o(1),
$$
and similarly
$$
\sum_{BB' \in E(L)} 
E(Y_B)E(Y_{B'})=O(k^4/n^2) (\sum_B \pi_B)^2
=o(1) \mu^2=o(1)
$$
and
$$
\sum_{KB \in E(L)} 
E(X_K)E(Y_{B})=O(k^4/n^2) (\sum_{K} \pi_K)(\sum_B \pi_B)
=o(1) \lambda\mu=o(1).
$$
The remaining term we have to bound, which is also the main term,
is
$$
\sum_{KK' \in E(L)} E(X_K X_{K'})+
\sum_{BB' \in E(L)} E(Y_B Y_{B'})+
\sum_{KB \in E(L)} E(X_K Y_{B}).
$$
Each of the first two summands here is $o(1)$, by the discussion
above. The third sum can be bounded by a similar computation, which
follows.
$$
\sum_{KB \in E(L)} E(X_K Y_{B})
=\sum_{i=2}^k {n \choose k}2^{-{k \choose 2}} {k \choose i}
{{n-k} \choose {k+2-i}}(2^{k+2-i}-1) 2^{-{{k+2} \choose 2}+{i
\choose 2}} =\sum_{i=2}^k h_i,
$$
where here
$$
h_i=
{n \choose k}{k \choose i} {{n-k} \choose {k+2-i}}
(2^{k+2-i}-1) 2^{-{k \choose 2}-{{k+2} \choose 2}+{i \choose 2}} 
$$
is the contribution arising from pairs $K,B$ with $|K \cap B|=i$.
Indeed, there are ${n \choose k}$ ways to choose $K$, then 
${k \choose i}$ ways to choose the intersection $K \cap B$ and
${{n-k} \choose {k+2-i}}$ to select the remaining elements of $B$.
Next we have to choose for each of these remaining elements if it
belongs to the same vertex class of the induced bipartite graph on
$B$ as the elements of $K \cap B$, or to the other vertex class
(and not all elements can belong to the same vertex class as those
of $K \cap B$, since otherwise we get an independent set and not 
a complete bipartite graph). There are $2^{k+2-i}-1$ ways to make
this choice. Finally, the ${k \choose 2}+{{k+2} \choose 2}-{i
\choose 2}$ edges of $K$ and $B$ should all be as needed, and the
probability for this is $2^{-{k \choose 2}-{{k+2} \choose 2}+{i
\choose 2}}$.

To bound $h_i$ we consider two possible ranges of the parameter
$i$, as done in the proofs of Lemmas \ref{l21} and \ref{l22}.
\vspace{0.2cm}

\noindent
{\bf Case 1:}\, If $2 \leq i \leq 2k/3$ then, since
$n=\Theta(k2^{k/2})$, 
$$
\frac{h_i}{f(k)^2} = \frac{{k \choose i} {{n-k} \choose {k+2-i}}}{{n
\choose k}}  2^{{i \choose 2}} 2^{-2k-1} (2^{k+2-i}-1)
\leq k^i (\frac{k}{n})^{i-2} 2^{-k} 2^{{i \choose 2}} 
$$
$$
=\Theta(k^i \frac{k^{i-2}}{n^{i-2}} (\frac{k}{n})^2 2^{{i \choose
2}})
=\Theta( (\frac{k^2 2^{(i-1)/2}}{n})^i )
\leq \frac{1}{n^{0.3i}}.
$$
\vspace{0.2cm}

\noindent
{\bf Case 2:}\, If $i=k-j,~0 \leq j \leq k/3$, then
$$
\frac{h_i}{f(k)} \leq {k \choose j} {{n-k} \choose {j+2}}
 2^{-{{k+2} \choose 2}+{i \choose 2}} 2^{j+2}
\leq k^j n^j 2^{-(j+2)(k-j)} 2^{j+2}
\leq (kn2^{-(k-j)}2)^{j+2} \leq \frac{1}{n^{0.3(j+2)}}.
$$
By the bounds above and Theorem \ref{t23}, (\ref{e26}) follows.
\vspace{0.2cm}

\noindent
{\bf Proof of Theorem \ref{t11}, parts (ii), (iii):}\,
Suppose the  assumptions of part (ii) hold. Then the expected
number of independent sets of size $k_0$ is $\lambda=\Theta(1)$,
and the expected number of induced complete bipartite graphs of
size $k_0+2$ is $\mu=\Theta(1)$. Note that this implies that the
expected number of independent sets of size $k_0-1$ is $n^{1-o(1)}$
and hence there are such sets whp, by Lemma \ref{l21}, and the 
expected number of independent sets of size $k_0+1$ is
$n^{-1+o(1)}$, and hence, by Markov's Inequality, whp there are no
such sets. Thus $\alpha(G)$ is either $k_0-1$ or $k_0$ whp.
Similarly, by Lemma \ref{l22}, $\beta(G)$ is either 
$k_0+1$ or $k_0+2$ whp.

Let $X,Y$ be the
random variables as above. Then by (\ref{e24}),(\ref{e25}) and (\ref{e26})
each of the four events $E_{ij}$ in (\ref{e22}) occurs with
probability bounded away from $0$ and $1$ (which we can compute, up
to a $(1+o(1))$ factor, as a function of $\lambda$ and $\mu$ which
are both $\Theta(1)$.) Also, by the previous paragraph, whp
exactly one of these events holds.

Note, now, that if $E_{11}$ holds then there is an independent set
of size $k_0$ and there is an induced complete bipartite graph
of size $k_0+2$, namely, in this case the assertion of Theorem
\ref{t11}, part (ii), (a), holds. Similarly, $E_{10}$ 
corresponds to (b), $E_{01}$ to (c) and $E_{00}$ to (d). This
completes the proof of part (ii). The proof of Part (iii) is
identical, replacing $k_0$ by $k_0+1$.  This completes the proof of
Theorem \ref{t11}. 
\hfill $\Box$
\vspace{0.2cm}

\noindent
{\bf Remark:}\, By the definition
of $k_0$, and as $f(k+1)/f(k)=n^{-1+o(1)}$ for $k$ close to
$k_0$, it follows that $1 \leq f(k_0) \leq n$ and 
$n^{-1+o(1)} \leq f(k_0+1) <1$.
Therefeore, 
for a given $k_0$, exactly one 
of the three possibilities described in parts (i), (ii) and (iii)
of Theorem \ref{t11} occurs. 

\section{Sparser random graphs}\,

In this section we prove Theorem  \ref{t12}. 
We need the following technical lemma.
\begin{lemma}
\label{l31}
There are absolute positive constants $b,c$ and $C$ so that for all
sufficiently large $n$ and every positive $p \leq c$ satisfying 
$np \geq C \log n$ the following holds.
For every integer $m$ satisfying
$$ 
\frac{pn}{16} \leq m \leq \frac{pn}{4}
$$
we have
\begin{equation}
\label{e31}
\sum_{2 \leq d \leq \sqrt m, d | m} {n \choose d} {{n-d} \choose
{m/d}} p^m \leq 2^{-b \log (1/p) m}.
\end{equation}
\end{lemma}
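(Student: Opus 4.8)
\medskip\noindent\textbf{Proof idea.}
The plan is to estimate the sum in (\ref{e31}) term by term, and the point that makes this work is a substitution that collapses the factor $\binom{n-d}{m/d}p^{m}$. Fix a divisor $d$ of $m$ with $2\le d\le\sqrt m$. Since $m\ge pn/16$ we have $n\le 16m/p$, so from $\binom{n-d}{m/d}\le\binom{n}{m/d}\le\bigl(\mathrm{e}\,n\,d/m\bigr)^{m/d}$ together with $p^{m}=(p^{d})^{m/d}$,
\[
\binom{n-d}{m/d}\,p^{m}\ \le\ \Bigl(\frac{\mathrm{e}\,n\,d}{m}\Bigr)^{m/d}p^{m}\ \le\ \Bigl(\frac{16\,\mathrm{e}\,d}{p}\Bigr)^{m/d}p^{m}\ =\ \bigl(16\,\mathrm{e}\,d\,p^{\,d-1}\bigr)^{m/d}.
\]
Combined with the crude bound $\binom{n}{d}\le(\mathrm{e}n/d)^{d}\le\bigl(16\,\mathrm{e}\,m/(p\,d)\bigr)^{d}$, the $d$-th summand $T_{d}$ satisfies, after taking base-$2$ logarithms,
\[
\log T_{d}\ \le\ d\,\log\frac{16\,\mathrm{e}\,m}{p\,d}\ +\ \frac md\,\log(16\,\mathrm{e}\,d)\ -\ \frac{m(d-1)}{d}\,\log(1/p).
\]

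Next I would bound these three quantities uniformly over $2\le d\le\sqrt m$. The last one is the main negative contribution: $\frac{m(d-1)}{d}\ge\frac m2$ for every $d\ge2$, so it is at most $-\frac m2\log(1/p)$. The middle one is the term to watch; a routine computation shows $d\mapsto\frac md\log(16\,\mathrm{e}\,d)$ is decreasing on $[2,\infty)$, hence it is maximized at $d=2$, where it equals $\frac m2\log(32\,\mathrm{e})$. The first term is at most $\sqrt m\bigl(\log(8\,\mathrm{e})+\log m+\log(1/p)\bigr)$, which is $o\bigl(m\log(1/p)\bigr)$, since $np\ge C\log n$ forces $m\ge\frac1{16}C\log n\to\infty$ while $\log(1/p)\ge\log(1/c)$ is bounded below. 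Taking the absolute constant $c$ small enough that $\log(1/c)\ge 2\log(32\,\mathrm{e})$ gives $\frac m2\log(32\,\mathrm{e})-\frac m2\log(1/p)\le-\frac m4\log(1/p)$, so, absorbing the lower-order first term, $\log T_{d}\le-\frac m8\log(1/p)$ for all admissible $d$ once $n$ is large. Finally, $m$ has fewer than $\sqrt m$ divisors in $[2,\sqrt m]$ and $\log\sqrt m=o\bigl(m\log(1/p)\bigr)$, so the whole sum is at most $\sqrt m\cdot 2^{-\frac m8\log(1/p)}\le 2^{-b\log(1/p)m}$ with, say, $b=\frac1{16}$; any fixed $C\ge1$ works, its only role being to make $m$ large in the $o(\cdot)$ estimates.

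The step I expect to need the most care is handling the genuinely $\Theta(m)$-sized positive term $\frac md\log(16\,\mathrm{e}\,d)$: one cannot simply discard the factor $p^{\,d-1}$ in $\bigl(16\,\mathrm{e}\,d\,p^{\,d-1}\bigr)^{m/d}$ (say by bounding it below $(1/2)^{m/d}$), because for $d$ close to $\sqrt m$ the exponent $m/d$ is only about $\sqrt m$, which is too small to absorb the contribution $\approx\frac12\sqrt m\log m$ coming from $\binom{n}{d}$. Keeping $p^{\,d-1}$ turns the decay into $p^{m(d-1)/d}$, and the inequality $\frac{m(d-1)}{d}\ge\frac m2$ then yields the full $2^{-\Theta(m\log(1/p))}$ decay, which comfortably dominates every positive term. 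Everything else --- the collapse via $n\le 16m/p$, the crude estimate of $\binom{n}{d}$, and swallowing the $\log m$ and divisor-count factors using $m\to\infty$ --- is routine.
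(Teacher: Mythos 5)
Your proposal is correct: each estimate checks out. The substitution $n\le 16m/p$ correctly collapses $\binom{n-d}{m/d}p^m$ into $\bigl(16\mathrm{e}\,d\,p^{d-1}\bigr)^{m/d}$, the monotonicity of $d\mapsto\frac{m}{d}\log(16\mathrm{e}d)$ on $[2,\sqrt m]$ holds, the term $d\log\frac{16\mathrm{e}m}{pd}\le\sqrt m\,(\log(8\mathrm{e})+\log m+\log(1/p))$ is indeed $o(m\log(1/p))$ uniformly because $m\ge C\log n/16\to\infty$ while $\log(1/p)$ is bounded below, and the divisor-count factor $\sqrt m$ is harmless; choosing $c\le(32\mathrm{e})^{-2}$ and $b=1/16$ works. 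Your route differs from the paper's in structure, though both rest on the same core fact that $m=\Theta(pn)$ turns the entropy-type gain $\log(n/m)$ into $\log(1/p)+O(1)$. The paper isolates the $d=2$ summand, asserts it dominates all other summands by a factor $2^{\Omega(m)}$ (or $n^{\Omega(m)}$ when $m\le n^{0.5}$), bounds it via the binary entropy function $H(\frac{m}{2n})$, and treats odd $m$ separately by the cruder bound $\binom{n}{(m+1)/2}p^{m+1}$; you instead bound every summand $T_d$ uniformly by $2^{-\frac m8\log(1/p)}$, keeping the decay $p^{m(d-1)/d}$ with $\frac{m(d-1)}{d}\ge\frac m2$, and then pay only the trivial factor $\sqrt m$ for the number of divisors. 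What your version buys is uniformity: no dominance claim to verify, no parity split, and the same argument handles $d$ near $\sqrt m$ (the regime you correctly flag as the delicate one, where $m/d\approx\sqrt m$ is too small to absorb $\binom nd$ unless the $p^{d-1}$ factor is retained). The paper's version buys a shorter computation and makes transparent that the $d=2$ term is the true order of magnitude of the sum.
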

\begin{proof}
Assume, first, that $m$ is even. In this case the sum in
(\ref{e31}) contains the summand ${n \choose 2}{{n-2} \choose
{m/2}}p^m$ which is larger by a factor of $2^{\Omega(m)}
\geq 2^{\Omega(n^{0.5})}$ than each of the other
summands if $m \geq n^{0.5}$, and by a factor of
$n^{\Omega(m)} \geq n^{\Omega(\log n)}$ if $m \leq n^{0.5}$.
Therefore, the left hand side of (\ref{e31}) is
$$
(1+o(1)){n \choose 2} {{n-2} \choose {m/2}}p^m
\leq 2^{(1+o(1))H(\frac{m}{2n})n} p^m=
2^{(1+o(1))H(\frac{m}{2n})n-m \log (1/p)}
$$
where $H(x)=-x \log x -(1-x)\log (1-x)$ is the binary entropy
function, and the $o(1)$ terms tend to zero as $n$ tends to
infinity.

Since for any $x$ smaller than some absolute positive constant 
$H(x)  \leq 1.1 x \log (1/x)$ we conclude that if $c$ is
sufficiently small then for $p \leq c$ and $m$ as above
$$
(1+o(1))H(\frac{m}{2n})n-m \log (1/p)
\leq [1.2 \frac{m}{2n} \log (\frac{2n}{m}) -\frac{m}{2n} 2 \log
(1/p)] n \leq -b' \frac{m}{2n} \log (1/p) n
=-\frac{b'}{2} \log (1/p) m
$$
for some absolute positive constant $b'$, where here we used the
fact that  $\log (\frac{2n}{m})=\log(1/p)+\Theta(1)$ since,
by assumption, $\frac{p}{32} \leq \frac{m}{2n} \leq \frac{p}{8}$.
This supplies the assertion of the lemma in case $m$ is even. 

If $m$ is odd we simply bound the left hand side of (\ref{e31})
by the far bigger quantity ${n \choose {(m+1)/2}} p^{m+1}$,
which is bounded by the right-hand-side of (\ref{e31}), using the
reasoning above. 
\end{proof}

Call a complete bipartite graph nontrivial if it is not a star,
that is, each of its vertex
classes is of size at least $2$.
\begin{lemma}
\label{l32}
There are absolute positive constants $a,c$ and $C$ so that for all
sufficiently large $n$ and every positive  $p \leq c$ satisfying $np
\geq C \log n$, the probability that $G=G(n,p)$ contains a set of at
most $2n$ pairwise edge disjoint nontrivial complete bipartite
graphs whose union covers at least $pn^2/4$ edges is at most
$2^{-a p \log (1/p) n^2}$. 
\end{lemma}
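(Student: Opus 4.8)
The plan is to bound the probability by a first-moment (union-bound) sum over all families of complete bipartite graphs that could realize the event, arranged so that the factor ``$p$ raised to the number of covered edges'' is never collapsed into a single power of $p$; instead the factor $p^{m_i}$ stays attached to the $i$-th member of the family and is paired against the number of ways to choose a complete bipartite graph with $m_i$ edges, which is exactly what Lemma~\ref{l31} controls. A naive union bound over all covers fails because $p\log(1/p)\to 0$, so the preparatory reduction below, which forces every member to have a number of edges of order $pn$, is essential.

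First I would reduce to a cleaner event. Suppose $G=G(n,p)$ contains a set of at most $2n$ pairwise edge disjoint nontrivial complete bipartite graphs whose union has at least $pn^2/4$ edges. Discard every member with fewer than $pn/16$ edges; these carry together fewer than $2n\cdot(pn/16)=pn^2/8$ edges, so the surviving members still cover at least $pn^2/8$ edges of $G$. Next split every surviving member with more than $pn/4$ edges into pieces, each a nontrivial complete bipartite graph with a number of edges in $[pn/16,pn/4]$, \emph{without discarding any edge}: for such a member $K_{a,b}$ with $2\le a\le b$, if $a\le pn/32$ partition the side of size $b$ into blocks whose sizes lie in $[\lceil pn/(16a)\rceil,\lfloor pn/(4a)\rfloor]$ (possible since $2\le\lceil pn/(16a)\rceil$, $\ 2\lceil pn/(16a)\rceil-1\le\lfloor pn/(4a)\rfloor$, and $b>pn/(4a)$); if $a>pn/32$, first partition the side of size $a$ into blocks of size $2$, with one block of size $3$ when $a$ is odd, and for each such block of size $s\in\{2,3\}$ partition the side of size $b\ (\ge a>pn/32)$ into blocks with sizes in $[\lceil pn/(16s)\rceil,\lfloor pn/(4s)\rfloor]$. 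In both cases the pieces are pairwise edge disjoint and partition the edge set of $K_{a,b}$. Together with the untouched members of edge number in $[pn/16,pn/4]$, this produces a family $\CC'$ of pairwise edge disjoint nontrivial complete bipartite graphs, each with edge number in $[pn/16,pn/4]$, whose total number of edges $e(\CC')$ is at least $pn^2/8$; the number of members of $\CC'$ is no longer bounded, but that will not matter. Consequently the probability to be bounded is at most $\sum_{\CC'}p^{e(\CC')}$, the sum ranging over all families $\CC'$ of complete bipartite graphs, each nontrivial with edge number in $[pn/16,pn/4]$, of total edge number $e(\CC')\ge pn^2/8$ (we have even dropped the edge-disjointness, which only enlarges the sum).

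Now I would group by the total $S=e(\CC')$ and count. Viewing a family with total $S$ as an ordered tuple $(H_1,\dots,H_t)$ --- an overcount, which only helps --- and putting $m_i=e(H_i)$, such a tuple amounts to a composition $S=m_1+\cdots+m_t$ with every $m_i\in[pn/16,pn/4]$ together with a choice, for each $i$, of a nontrivial complete bipartite graph with exactly $m_i$ edges, so
$$
\sum_{\CC':\,e(\CC')=S}p^{e(\CC')}\ \le\ \sum_{\substack{S=m_1+\cdots+m_t\\ m_i\in[pn/16,\,pn/4]}}\ \prod_{i=1}^{t}\Big(\big|\{H:\ H\text{ nontrivial complete bipartite},\ e(H)=m_i\}\big|\cdot p^{m_i}\Big).
$$
The number of labelled nontrivial complete bipartite graphs with exactly $m$ edges is at most $\sum_{d\mid m,\ 2\le d\le\sqrt m}\binom{n}{d}\binom{n-d}{m/d}$ (pick the smaller side, then the larger; the case $d=\sqrt m$ gets over-counted, harmlessly), so by Lemma~\ref{l31} each bracketed factor is at most $2^{-b\log(1/p)m_i}$, and hence every summand equals $\prod_i 2^{-b\log(1/p)m_i}=2^{-b\log(1/p)S}$. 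Since the number of compositions of $S$ into positive parts is $2^{S-1}$, this gives $\sum_{\CC':\,e(\CC')=S}p^{e(\CC')}\le 2^{S-1}\,2^{-b\log(1/p)S}\le 2^{-(b\log(1/p)-1)S}$.

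Finally, choosing $c$ small enough that $b\log(1/c)\ge 2$ makes $b\log(1/p)-1\ge\frac12 b\log(1/p)$ for all $p\le c$, so summing the geometric series over $S\ge pn^2/8$ yields a total of at most $2\cdot 2^{-\frac12 b\log(1/p)\cdot pn^2/8}\le 2^{-a\,p\log(1/p)\,n^2}$ for a suitable absolute constant $a>0$ and all large $n$, as claimed. I expect the reduction to be the main obstacle: one must verify that \emph{every} complete bipartite graph with more than $pn/4$ edges can be cut into nontrivial ``medium'' pieces with no edge lost --- this is where the case split $a\le pn/32$ versus $a>pn/32$, the hypothesis $np\ge C\log n$ (which keeps $pn/16$ large and the block-size intervals nonempty), and the elementary fact that any integer $\ge L$ is a sum of integers in $[L,U]$ whenever $U\ge 2L-1$ all enter. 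Everything past the reduction is routine, the one conceptual point being that the number of ways to break $S$ into ordered parts is only $2^{S-1}$, so the union bound survives however many bipartite graphs the cover uses.
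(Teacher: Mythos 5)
Your proof is correct, and it rests on the same key ingredient as the paper (Lemma \ref{l31} applied to pieces with between $pn/16$ and $pn/4$ edges), but the bookkeeping after the reduction is genuinely different. The paper splits any surviving member with more than $pn/4$ edges by repeatedly halving its larger vertex class, then \emph{re-imposes} the bound of at most $2n$ members by selecting an arbitrary subfamily of at most $2n$ pieces (which still covers $\geq pn^2/8$ edges since each piece has $\geq pn/16$ edges); the union bound then pays only $2n\cdot n^{4n}$ for the choice of the number of members and their edge counts, and this factor is absorbed using $np\geq C\log n$. You instead cut every large member into medium pieces without discarding any edge (hence need the more careful two-case partition with block sizes in $[\lceil pn/(16a)\rceil,\lfloor pn/(4a)\rfloor]$, which you verify correctly), give up any bound on the number of pieces, and control the enumeration by grouping families according to their total edge count $S$ and counting ordered compositions, paying $2^{S-1}$ per value of $S$; this factor is absorbed by taking $c$ small enough that $b\log(1/p)\geq 2$, rather than by the $np\geq C\log n$ hypothesis (which you still need, but only because Lemma \ref{l31} needs it). The trade-off: your route avoids the paper's ``select at most $2n$ pieces'' step and the $n^{4n}$ absorption, at the cost of a more delicate splitting argument and a slightly smaller admissible constant $c$; the paper's halving-plus-selection is shorter because discarding edges is harmless once each piece is guaranteed $\geq pn/16$ edges. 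Both yield the stated bound $2^{-ap\log(1/p)n^2}$.
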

\begin{proof}
If there are such nontrivial complete bipartite subgraphs, omit
each one that contains at most $pn/16$ edges (if there are such
subgraphs). The remaining subgraphs still cover at least
$pn^2/4-2n \cdot pn/16=pn^2/8$ edges. Each such subgraph with
more than  $pn/4$ edges can be partitioned into two nontrivial
complete bipartite subgraphs of nearly equal size, by splitting the
larger vertex class into two nearly equal classes. Repeating this
process we obtain a family of pairwise edge disjoint complete
bipartite subgraphs, each having at least $pn/16$ and at most 
$pn/4$ edges, whose union covers at least $pn^2/8$ edges.  
Let $\FF$  be a family of at most $2n$ arbitrarily chosen members of 
this family, whose union covers at least $pn^2/8$ edges. (If the
whole family contains less than $2n$ subgraphs, let $\FF$ be all of
them, else, take any $2n$ members, since each of them has at least
$pn/16$ edges altogether they cover at least $pn^2/8$ edges).
Put $\FF=\{F_i: i \in I\}$, where $|I| \leq 2n$ and $F_i$ is a
nontrivial complete  bipartite subgraph of $G$ with $m_i$ edges.
Note that by the
discussion above, if $G$ contains a set of at most $2n$ complete
bipartite graphs as in the lemma, then it contains a family $\FF$
as above. 

We complete the proof by establishing an upper bound for the
probability that $G$ contains such a family $\FF$. This is done by 
a simple union bound, using Lemma \ref{l31}. There are $2n$ ways to
choose the size of $I$, then there are less than
$(n^2)^{2n}=n^{4n}$ ways to choose the numbers $m_i$. Once these
are chosen, there are 
$$
\sum_{2 \leq d \leq \sqrt m_i, d | m_i} {n \choose d} {{n-d} \choose
{m_i/d}}
$$ 
ways to select the sets of vertices of the two vertex
classes of $F_i$. As all the graphs $F_i$ are pairwise
edge-disjoint, the probability that all those are indeed
subgraphs of $G$ is at most
$\prod_{i \in I} p^{m_i}$, implying that the probability that there
is an $\FF$ as above is at most
$$
(2n) n^{4n} \prod_{i \in I} 
\sum_{2 \leq d \leq \sqrt m_i, d | m_i} {n \choose d} {{n-d} \choose
{m_i/d}} p^{m_i}.
$$
By Lemma \ref{l31} the last quantity is at most
$$
(2n) n^{4n} \prod_{i \in I} 2^{-b \log (1/p)m_i }
\leq (2n)n^{4n} 2^{-b \log(1/p) pn^2/8} \leq 2^{-a \log(1/p) pn^2}
$$
for some absolute positive constant $a$, where here we used the
fact that $pn \geq C \log n$ which implies that
$(2n)n^{4n} =2^{O(n \log n)} < 2^{o(\log (1/p) pn^2)}.$
This completes the proof. 
\end{proof}

Following Chung and Peng \cite{CP}, 
let $\tau'(G)$ denote the minimum number of
pairwise edge disjoint nontrivial bipartite subgraphs of $G$ whose
union covers all edges of $G$ (if there is no such cover define
$\tau'(G)=\infty$).  Lemma \ref{l32} implies that the probability
that $G=G(n,p)$ for $p$ as in the lemma satisfies $\tau'(G) \leq 2n$
is extremely small, as we observe next.
\begin{coro}
\label{c33}
There are absolute positive constants $a,c$ and $C$ so that for all
sufficiently large $n$ and every positive  $p \leq c$ satisfying $np
\geq C \log n$, the probability that $G=G(n,p)$ 
satisfies $\tau'(G) \leq 2n$ is at most $2^{-apn^2}$.
\end{coro}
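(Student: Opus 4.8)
The plan is to derive Corollary \ref{c33} from Lemma \ref{l32} together with a standard concentration bound for the number of edges of $G(n,p)$. I take the constants $c$ and $C$ (and, eventually, $a$) to be at least as restrictive as those supplied by Lemma \ref{l32}. The first, essentially definitional, observation is that if $\tau'(G)\le 2n$ then $G$ has a family of at most $2n$ pairwise edge-disjoint nontrivial complete bipartite subgraphs whose union is all of $E(G)$, and in particular this family covers exactly $e(G)$ edges. Hence, on the event that also $e(G)\ge pn^2/4$, the graph $G$ contains a set of at most $2n$ pairwise edge-disjoint nontrivial complete bipartite subgraphs covering at least $pn^2/4$ edges, which is precisely the event that Lemma \ref{l32} bounds, by $2^{-b_1 p\log(1/p)n^2}$ for an absolute constant $b_1>0$.

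It remains to bound $\mbox{Prob}(e(G)<pn^2/4)$. Here $e(G)$ is a sum of $\binom{n}{2}$ independent indicators, with mean $\binom{n}{2}p=(1+o(1))pn^2/2$, so for $n$ large this mean is at least, say, $pn^2/3$, and the target value $pn^2/4$ lies a fixed fraction below it. Since $np\ge C\log n$ forces $pn^2\ge Cn\log n\to\infty$, the Chernoff bound gives $\mbox{Prob}(e(G)<pn^2/4)\le 2^{-c_1 pn^2}$ for some absolute positive constant $c_1$.

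A union bound then completes the argument:
\begin{align*}
\mbox{Prob}(\tau'(G)\le 2n)
&\le \mbox{Prob}\bigl(e(G)<pn^2/4\bigr)+\mbox{Prob}\bigl(e(G)\ge pn^2/4,\ \tau'(G)\le 2n\bigr)\\
&\le 2^{-c_1 pn^2}+2^{-b_1 p\log(1/p)n^2}.
\end{align*}
Because $p\le c<1$ we have $\log(1/p)\ge\log(1/c)=\Omega(1)$, so the second term is at most $2^{-b_1\log(1/c)\,pn^2}$. Choosing $a$ smaller than $\min\{c_1,\,b_1\log(1/c)\}$ and absorbing the leading factor $2$ into the exponent for $n$ large then yields $\mbox{Prob}(\tau'(G)\le 2n)\le 2^{-apn^2}$, as claimed.

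I do not expect a genuine obstacle here. The only steps that need a word of care are checking that the hypothesis of Lemma \ref{l32} is actually met (which is exactly why the elementary edge-count concentration bound is inserted) and the trivial bookkeeping that upgrades the $p\log(1/p)n^2$ exponent of Lemma \ref{l32} to the $pn^2$ exponent stated here, using only $p\le c$.
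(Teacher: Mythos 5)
Your proposal is correct and follows essentially the same route as the paper: invoke Lemma \ref{l32} for the event that few edge-disjoint nontrivial complete bipartite subgraphs cover at least $pn^2/4$ edges, use a Chernoff-type bound to rule out $e(G)<pn^2/4$, and combine by a union bound, absorbing $\log(1/p)\geq\log(1/c)$ into the constant $a$.
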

\begin{proof}
By the standard estimates for Binomial distributions (c.f., e.g.,
\cite{AS}, Theorem A.1.13) the probability
that $G$ has less than $pn^2/4$ edges is at most
$e^{-(1+o(1))pn^2/16}$. By Lemma \ref{l32}
the probability that $G$ contains a set of at
most $2n$ pairwise edge disjoint nontrivial complete bipartite
graphs whose union covers at least $pn^2/4$ edges is at most
$2^{-a p \log (1/p) n^2}$. If none of these two rare events happens
then clearly $\tau'(G) > 2n$.
\end{proof}

The following lemma 
is proved in \cite{CP} 
\begin{lemma}[\cite{CP}, Lemma 14]
\label{l34}
For any graph $G=(V,E)$ there exists  a set of vertices 
$U \subset V$ so that if $G[U]$ denotes the induced subgraph of $G$
on $U$ then
$$
\tau(G)=|V|-|U|+\tau'(G[U]).
$$
\end{lemma}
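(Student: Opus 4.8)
The plan is to deduce the identity from an iterative vertex-deletion argument applied to an optimal decomposition of $G$. As a preliminary remark, the inequality $\tau(G)\le |V\setminus U|+\tau'(G[U])$ holds for \emph{every} $U\subseteq V$: take a nontrivial edge-disjoint cover of $G[U]$ of size $\tau'(G[U])$, order $V\setminus U=\{v_1,\dots,v_t\}$, and for each $i$ add the star at $v_i$ consisting of all edges joining $v_i$ to $\{v_{i+1},\dots,v_t\}\cup U$; these $t$ stars are pairwise edge-disjoint and disjoint from the cover of $G[U]$, and together with it they cover every edge of $G$ exactly once. So it is enough to produce one $U$ realizing equality.

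The key structural observation I would isolate is: in \emph{any} decomposition $\mathcal{B}$ of a graph $H$ of minimum size $\tau(H)$, no two distinct members have the same vertex $v$ as a singleton vertex class. For if $\{v\}\times Y_1$ and $\{v\}\times Y_2$ both belong to $\mathcal{B}$, edge-disjointness forces $Y_1\cap Y_2=\emptyset$, and then replacing the two of them by the single star $\{v\}\times(Y_1\cup Y_2)$ gives a strictly smaller decomposition, contradicting minimality. (One must keep in mind that a single-edge member $\{v,w\}$ is a star centered at $v$ \emph{and} at $w$; the merge argument covers this case too.) The consequence is: if an optimal $\mathcal{B}$ contains a trivial member, i.e.\ a star, pick a vertex $v$ that is a singleton class of such a member; then deleting $v$ from every member of $\mathcal{B}$ and discarding members that become edgeless yields an edge-disjoint complete-bipartite decomposition $\mathcal{B}-v$ of $H-v$, and it has \emph{exactly} $|\mathcal{B}|-1$ members, since exactly one member (the unique one with class $\{v\}$) disappears while all other members remain complete bipartite and still cover the remaining edges exactly once.

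Combining this with the trivial bound $\tau(H)\le\tau(H-v)+1$ — add to any decomposition of $H-v$ the star at $v$ covering all edges of $H$ incident to $v$ — forces $\tau(H-v)=\tau(H)-1$, so $\mathcal{B}-v$ is itself optimal and the observation applies again. I would then iterate: beginning with an optimal $\mathcal{B}_0$ for $G=G_0$, while the current optimal decomposition $\mathcal{B}_i$ of $G_i$ has a trivial member, choose such a center $v$ and pass to $G_{i+1}=G_i-v$ with $\mathcal{B}_{i+1}=\mathcal{B}_i-v$, which is optimal for $G_{i+1}$ and has one fewer member. Since $|\mathcal{B}_i|=\tau(G)-i$ strictly decreases, the process halts after $t$ steps at an induced subgraph $G[U]$, with $U=V$ minus the $t$ deleted vertices, whose optimal decomposition $\mathcal{B}_t$ has \emph{all} members nontrivial. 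Then $\mathcal{B}_t$ is a nontrivial edge-disjoint cover of $G[U]$, so $\tau'(G[U])\le|\mathcal{B}_t|=\tau(G_t)=\tau(G[U])$; since also $\tau(G[U])\le\tau'(G[U])$ trivially, these are equal, and $|\mathcal{B}_t|=\tau(G)-t=\tau(G)-|V\setminus U|$ gives $\tau(G)=|V|-|U|+\tau'(G[U])$, as required.

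The part needing the most care is the structural observation together with its bookkeeping: one must verify that deleting the chosen center lowers the member count by \emph{exactly} one (not more), which is precisely the point where ``no two members of an optimal decomposition share a singleton class'' is used, and one must confirm that optimality is genuinely preserved at each deletion so the argument can recurse. The only real subtlety is the single-edge member, which has two singleton classes and must be treated consistently in both the merge step and the count; the remaining ingredients — the star upper bound, the bound $\tau(H)\le\tau(H-v)+1$, and termination — are routine.
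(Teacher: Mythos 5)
Your proof is correct, but it reaches the identity by a genuinely different mechanism than the paper. The paper argues in one shot: among all decompositions of $G$ into $\tau(G)$ complete bipartite graphs it fixes one with the \emph{maximum number of stars}, takes $V\setminus U$ to be the set of star centers, replaces every non-star member $B$ by its restriction to $V(B)\cap U$ while absorbing the discarded edges into the stars, and invokes the maximality of the star count to guarantee that the restricted members are still nontrivial, so that they decompose $G[U]$ exactly and the count gives the lemma. You instead run an iterative vertex-deletion argument whose engine is the exchange observation that in a minimum decomposition no two members share the same singleton class (otherwise merge the two stars at that vertex), so deleting a star center $v$ kills exactly one member; combined with the trivial bound $\tau(H)\le\tau(H-v)+1$ this forces $\tau(H-v)=\tau(H)-1$, preserves optimality, and lets you iterate until the surviving optimal decomposition of $G[U]$ has no trivial member, whence $\tau'(G[U])=\tau(G[U])=\tau(G)-|V\setminus U|$. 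Your route needs no secondary extremal choice and never modifies the non-star members: each step is a local, fully book-kept exchange (including the single-edge member with two possible centers, which you handle correctly), at the cost of an induction; the paper's route is shorter, but its phrase ``by modifying the stars, if needed'' compresses exactly the absorption and nontriviality checks that your argument makes explicit. One small remark: your preliminary inequality $\tau(G)\le|V\setminus U|+\tau'(G[U])$ is never actually used at the end, since your final step already yields equality directly from $\tau'(G[U])\le|\mathcal{B}_t|=\tau(G[U])\le\tau'(G[U])$; it is harmless, and it is the same easy direction the paper leaves implicit.
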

The proof is by considering a bipartite decomposition of $G$ into
$\tau=\tau(G)$ complete bipartite subgraphs, with a maximum number
of stars (among all decompositions into $\tau$ such subgraphs). Suppose
that in this decomposition the stars used are centered at the vertices
$V-U$, where $U \subset V$. Now replace each of the remaining,
non-star member $B$ in the decomposition by its induced subgraph on
$V(B) \cap U$. It is easy to see that by modifying the stars, if needed, the
resulting graphs also form a bipartite decomposition  of $G$, and
by the maximality of the number of stars, each of the remaining
subgraphs besides the $|V|-|U|$ stars 
is a nontrivial complete bipartite graph, implying the
statement of the lemma.
\vspace{0.2cm}

\noindent
{\bf Proof of Theorem \ref{t12}:}\,
Suppose $G=G(n,p)$ with $\frac{2}{n} \leq p \leq c$ and $c$ as in
Corollary \ref{c33}.
The required upper bound for
$\tau(G)$ follows from the well known fact that 
$\alpha(G)=\Theta(\frac{\log (np)}{p})$ whp (see \cite{BE} for a much
more precise result). We proceed with the proof of the lower bound.

The lower bound for $p=o(n^{-7/8})$ follows from the assertion of
Proposition \ref{p13}, proved in the next section. 
We thus may and will assume that, say,
$np \geq n^{0.1}$. Note that in this case $\log (np)=\Theta( \log
n)$.

By Corollary \ref{c33}, the probability that there exists a set 
$U$ of size $k \geq \frac{2\log n}{ap}$, for an appropriately
chosen absolute constant $a>0$,
so that $\tau'(G[U]) \leq 2|U|$ does not exceed
$$
{n \choose k} 2^{-apk^2} \leq 2^{k \log n-apk^2} \leq 
2^{k \log n-apk 2 \log n/(ap)}=2^{-k \log n} =n^{-k}.
$$
Note that to apply the Corollary, $k$ and $p$ should satisfy
$$
kp \geq C \log k.
$$
As $k \leq n$, and $k \geq \frac{2 \log n}{ap}$
it suffices to have 
$$
\frac{2 \log n}{ap} p=\frac{2 \log n}{a} \geq  C \log n
$$
and this holds by taking the constant $a$ as in Corollary
\ref{c33}, and by decreasing it to $2/C$ if it is larger (the
assertion of the Corollary clearly holds when $a$ is decreased).
Summing over all values  of $k\geq \frac{2\log n}{ap}$ we conclude
that whp there is no such set $U$. Suppose that's the case.

By Lemma \ref{l34}
there exists  a set of vertices 
$U \subset V$ so that if $G[U]$ denotes the induced subgraph of $G$
on $U$ then
$$
\tau(G)=n-|U|+\tau'(G[U]).
$$
Put $|U|=k$. If $k \leq \frac{2\log n}{ap}$ 
then 
$$
\tau(G)=n-|U|+\tau'(G[U]) \geq n-k \geq n-\frac{2\log n}{ap}
$$
providing the required estimate. For larger values of $k$, by 
the assumption above
$$
\tau(G)=n-|U|+\tau'(G[U]) \geq n-k+2k >n,
$$
providing the required bound (with room to spare). This completes the
proof.
\hfill$\Box$\medskip

\section{Concluding remarks}

We have shown that the conjecture of Erd\H{o}s that for
$G=G(n,0.5)$ the equality $\tau(G)=n-\alpha(G)$ holds whp is 
incorrect as stated.
The following slight variation of this conjecture seems
plausible.
\begin{conj}
\label{l41}
For the random graph $G=G(n,0.5)$,
$\tau(G)=n-\beta(G)+1$ whp.
\end{conj}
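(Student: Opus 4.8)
\medskip
\noindent
\textbf{A proof strategy for Conjecture \ref{l41}.}
The bound $\tau(G)\le n-\beta(G)+1$ holds for every graph, so only the matching lower bound is needed. For any $W\subseteq V$, combining $n-|W|$ stars centered at $V\setminus W$ with an optimal nontrivial bipartite cover of $G[W]$ gives a decomposition of $G$ into $n-|W|+\tau'(G[W])$ complete bipartite subgraphs, and by Lemma \ref{l34} the optimum is attained in this form; hence
\[
\tau(G)=n-\max_{W\subseteq V}\bigl(\,|W|-\tau'(G[W])\,\bigr),
\]
and Conjecture \ref{l41} is equivalent to the assertion that whp $\tau'(G[W])\ge |W|-\beta(G)+1$ for every $W\subseteq V$. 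Since $\beta(G)=(2+o(1))\log_2 n$ whp, the inequality is vacuous when $|W|\le\beta(G)$, and when $|W|$ is slightly larger than $\beta(G)$ it reduces to first moment estimates of the flavour used in Section 2 --- roughly, that no maximum induced complete bipartite subgraph of $G$ has several vertices of $V$ totally non-adjacent to it; this is where the argument is most sensitive to the value of $n$ (just as Theorem \ref{t11} is), and as there one may have to treat the boundary values of $n$ separately, or track the concentration of $\beta(G)$.

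\medskip
\noindent
For the intermediate range, say $\beta(G)<|W|\le(2-\epsilon)\beta(G)\approx(4-\epsilon')\log_2 n$, the plan is to invoke the Graham--Pollak / Witsenhausen eigenvalue bound $\tau'(H)\ge\tau(H)\ge\max\bigl(n_+(H),n_-(H)\bigr)$, where $n_+(H),n_-(H)$ are the numbers of positive and negative eigenvalues of the adjacency matrix of $H$. For $H=G[W]$ the quantity $\max(n_+,n_-)$ is concentrated near $|W|/2$, which exceeds the target $|W|-\beta(G)+1$ precisely in this range; to make this survive the union over the $\binom{n}{|W|}$ choices of $W$ one needs a large deviation estimate for the eigenvalue counting function of $G(|W|,1/2)$. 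Since for $|W|$ a constant factor below $2\beta(G)$ the required deviation of $n_+$ from $|W|/2$ is macroscopic, the exponential-in-$|W|^2$ tail bounds available for the empirical spectral measure of Wigner type matrices should suffice, with the borderline $|W|\approx 2\beta(G)$ needing extra care.

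\medskip
\noindent
The crux is the range $|W|\gtrsim 4\log_2 n$, and in particular $|W|$ polynomial in $n$. Here the eigenvalue bound yields only $\tau'(G[W])\gtrsim|W|/2$, short of the required $|W|-(2+o(1))\log_2 n$ by a linear amount, and a crude union bound is hopeless as well: a fixed edge disjoint family of $t$ nontrivial complete bipartite subgraphs covering all of $G[W]$ prescribes $\Theta(|W|^2)$ edges, of probability $2^{-\Theta(|W|^2)}$, while there are $3^{\Theta(|W|t)}$ such families, and for $t=(1-o(1))|W|$ the two exponents match. What is needed is a strengthening of the combinatorial argument of Chung and Peng --- which presently gives only $\tau(G)\ge n-o((\log n)^{3+\epsilon})$ --- all the way down to $\tau'(G[W])\ge|W|-O(\log n)$ for every induced subgraph on $m\ge C\log n$ vertices, ideally with a super-polynomially small failure probability so that it may be unioned over all vertex sets $W$; the additive constant would then have to be sharpened to the exact value $\beta(G)-1$ by understanding how an optimal nontrivial cover of a near-complete random graph is forced to interact with its largest induced complete bipartite subgraph. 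I would attempt the first part through a multi-scale version of Lemmas \ref{l31}--\ref{l32}: first discard the few pieces carrying many edges, then exploit that each surviving piece, being triangle free, is ``thin'' and hence cheap to union over. I expect this step to be the main obstacle, and it may well be where the genuine difficulty of the conjecture lies.
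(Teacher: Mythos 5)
You are addressing Conjecture \ref{l41}, which the paper states as an open problem and does not prove; so the only question is whether your argument settles it, and it does not -- it is a program with the decisive step missing, as you yourself concede at the end. The part that is solid is the reduction: since $\tau(G)\le n-|W|+\tau'(G[W])$ for every $W$ and Lemma \ref{l34} gives equality for some $W$, the conjecture is indeed equivalent to showing that whp $\tau'(G[W])\ge |W|-\beta(G)+1$ for all $W\subseteq V$ (for independent $W$ this needs $\alpha(G)\le\beta(G)-1$, which Theorem \ref{t11} supplies whp, so ``vacuous for $|W|\le\beta(G)$'' is slightly too quick but harmless). Beyond this reformulation, however, neither of the two substantive ranges is actually established.

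In the intermediate range $|W|=\Theta(\log n)$ the bound $\tau'(H)\ge\tau(H)\ge\max(n_+(H),n_-(H))$ is legitimate, but the union is over $\binom{n}{|W|}=2^{\Theta(|W|\log n)}=2^{\Theta(|W|^2)}$ sets, which is of exactly the same exponential order as the best available tail bounds for a macroscopic deviation of $n_+$ of a $|W|\times|W|$ Wigner-type matrix (speed $|W|^2$, typically with logarithmic losses); so the outcome hinges on unproved constants, and near $|W|\approx 2\beta(G)$ the required inequality $n_+\ge |W|-\beta(G)+1$ sits essentially at the mean $|W|/2$, where no large-deviation estimate can help at all. More seriously, for $|W|\gg\log n$ you propose ``a multi-scale version of Lemmas \ref{l31}--\ref{l32}'', but those lemmas are powered by the factor $\log(1/p)$ and are proved only for $p\le c$ small: at $p=1/2$ the entropy term $2^{H(m/2n)n}$ counting the placements of a single nontrivial piece with $m=\Theta(n)$ edges already dwarfs $p^m=2^{-m}$ (indeed $G(n,1/2)$ abounds in copies of $K_{2,m/2}$ of this size), so the first-moment mechanism collapses and a genuinely different idea is required -- this is precisely why Theorem \ref{t12} is stated only for $p\le c$. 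Finally, even if a counting argument of this shape were pushed through, it would deliver $\tau'(G[W])\ge |W|-O(\log n)$, whereas the conjecture needs the exact additive constant $\beta(G)-1$; the step that would convert a $\Theta(\log n)$ loss into this sharp constant is only gestured at. So the proposal correctly locates the difficulty but does not overcome it, and the conjecture remains open.
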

The more general conjecture of \cite{CP} that for any 
$p \leq 0.5$ and for $G=G(n,p)$,
$\tau(G)=n-(1+o(1))\alpha(G)$ whp may well be true. Although
we are not able to prove it, note that Theorem \ref{t12} proves a
similar, though 
weaker statement, namely, for all $p \leq c$ and for $G=G(n,p)$,
$\tau(G)=n-\Theta(\alpha(G))$ whp, where $c$ is an absolute
positive constant.

For $p < 0.5$ which is bounded away from $0.5$, it is easy to check
that for $G=G(n,p)$, $\beta(G) < \alpha(G)$ whp, and hence in this
range the upper bound $\tau(G) \leq n-\alpha(G)$ is typically
better than the upper bound $\tau(G) \leq n-\beta(G)+1$ (which is
much better for $p>0.5$, but we restrict our attention here to the
case $p \leq 0.5$).  For very sparse graphs, Proposition 
\ref{p13} determines precisely the typical value of $\tau(G)$.
Here is the simple proof.

\noindent
{\bf Proof of Proposition \ref{p13}:}\, By Lemma \ref{l34},
$$
\tau(G)=n-|U|+\tau'(G[U])
$$
for some set of vertices $U$ of the graph $G=G(n,p)$. However, when
$p=o(n^{-7/8})$ then, whp, $G$ contains no non-star complete
bipartite graphs besides $K_{2,2}=C_4$, and there are no two copies
of $C_4$ that share a vertex. Therefore, any connected component of 
the induced subgraph $G[U]$ on $U$ must be either an isolated vertex
or a cycle of length $4$, completing the proof. 
\hfill $\Box$ 
\vspace{0.2cm}

\noindent
For very sparse random graphs, namely, if $p=\Theta(1/n)$ and
$G=G(n,p)$, then the whole graph 
$G$ contains a connected component which is $C_4$
with probability that is bounded away from $0$ and $1$. If this is
the case, then the expression  $n-\mbox{max}(\gamma(H))$ provided
in Proposition \ref{p13} for $\tau(G)$ is strictly smaller than
$n-\alpha(G)$. Thus, for very sparse random graphs, it is  not the case
that $\tau(G)=n-\alpha(G)$ whp. Yet, it may well be the case that
for any fixed constant $p$ bounded away from $0$ and $0.5$,
$\tau(G)=n-\alpha(G)$ whp. At the moment we can neither prove  
nor disprove this statement, which remains open.
\vspace{0.2cm}

\noindent
{\bf Acknowledgment} I would like to thank Svante Janson for
helpful comments.

\end{document}